
\documentclass{birkjour}
%
%
%
 \newtheorem{thm}{Theorem}[section]
 \newtheorem{cor}[thm]{Corollary}
 \newtheorem{lem}[thm]{Lemma}
 
 \theoremstyle{definition}
 
 \theoremstyle{remark}
 \newtheorem{rem}[thm]{Remark}
 
 \numberwithin{equation}{section}

\begin{document}

%
%
%
%
%
%
%
%
%

\title[Functional identities involving inverses]
 {Functional identities involving inverses on \\Banach algebras}

\author[K. Luo]{Kaijia Luo}

\address{%
Institute of Mathematics\\
Hangzhou Dianzi University\\
Hangzhou\\
China}
\email{kaijia\_luo@163.com}
\thanks{Corresponding author: Kaijia Luo. E-mail addresses: kaijia\_luo@163.com.\\
This work was supported by  the National Natural Science Foundation of China (Grant No.11871021).}
\author[J. Li]{Jiankui Li}
\address{School of Mathematics\br
East China University of Science and Technology\br
Shanghai\br
China}
\email{jkli@ecust.edu.cn}
\subjclass{Primary 47B47; Secondary 16R60}

\keywords{Functional identity, Inverse, Commuting mapping, Banach algebra}

\date{}

\begin{abstract}
The purpose of this  paper is to characterize several classes of functional identities involving inverses with related mappings from a  unital Banach algebra $\mathcal{A}$ over the complex field into  a  unital $\mathcal{A}$-bimodule $\mathcal{M}$.  Let $N$ be a fixed invertible element in $\mathcal{A}$, $M$ be a fixed element in $\mathcal{M}$, and $n$ be a positive integer.
We investigate the forms of additive mappings $f$, $g$ from $\mathcal{A}$ into $\mathcal{M}$ satisfying one of the following identities:
\begin{equation*}
\begin{aligned}
&f(A)A- Ag(A) = 0\\
&f(A)+ g(B)\star A= M\\
&f(A)+A^{n}g(A^{-1})=0\\
&f(A)+A^{n}g(B)=M
\end{aligned}
\qquad
\begin{aligned}
&\text{for each invertible element}~A\in\mathcal{A}; \\
&\text{whenever}~ A,B\in\mathcal{A}~\text{with}~AB=N;\\
&\text{for each invertible element}~A\in\mathcal{A}; \\
&\text{whenever}~ A,B\in\mathcal{A}~\text{with}~AB=N,
\end{aligned}
\end{equation*}
where  $\star$ is either the Jordan product $A\star B = AB+BA$ or the Lie product $A\star B = AB-BA$.
\end{abstract}

\maketitle
\section{Introduction}
The theory of functional identities deals with the form or  structure of functions on rings  that satisfy certain  identities involving arbitrary or special elements.
Functional identities have been demonstrated to be applicable in solving  certain problems in  mathematical fields, especially in operator theory and functional analysis.
A comprehensive introduction to the theory of  functional identities and their applications is given in the book \cite{BCM}.

Let $\mathcal{R}$ be a ring. The identity $A f(B) + B g( A) = 0$ for all $A, B \in \mathcal{R}$ with the functions $f$, $g:\mathcal{R}\rightarrow\mathcal{R}$ is a very basic example of  functional identities.
 It is obvious that a pair of functions of the form $f=g=0$ is a standard solution of the above functional identity.
The \emph{standard solutions} of functional identities are those that do not depend on some structural property of the ring, but are merely consequences of ring axioms and formal computations.

The exact beginning of research on results regarding functional identities involving inverses on rings  is difficult to determine, due to the wide range of inclusion of functional identities. 
A notable example of functional identities is the commuting mapping.
 A mapping $f$ from a  ring $\mathcal{R}$ into itself is called  \emph{commuting}  if $f(A)A = Af(A)$ for each element $A\in\mathcal{R}$.
An important result on commuting mappings is Posner’s theorem \cite{P}  published in 1957. This theorem shows that the existence of a nonzero commuting derivation on a  prime ring $\mathcal{R}$ implies that $\mathcal{R}$ is commutative.
It was proved in \cite[Theorem 3.2]{B1993} that if  $f$ is an additive commuting mapping on a prime ring $\mathcal{R}$, there exist an element $\lambda$ in the extended centroid $\mathcal{C}$ of $\mathcal{R}$ and an additive
mapping $\mu:\mathcal{R}\rightarrow\mathcal{C}$ such that $f(A)=\lambda A+\mu(A)$ for each $A\in\mathcal{R}$.
The further  comprehensive discussion  about commuting mappings can be found in a survey paper \cite{B}.
In 1995,   Bre\v{s}ar \cite{B1995}  investigated   additive mappings $f, g$ from a prime ring $\mathcal{R}$ into itself satisfying the identity 
\begin{equation*}\label{1}
f(A)A- Ag(A) = 0\tag{F1}
\end{equation*} for each $A\in\mathcal{R}$ which is a generalization of the commuting mapping.
Recently, Ferreira, Julius,  and  Smigly showed   that if  additive mappings $f,g$ from an alternative division ring $\mathcal{D}$  into itself satisfy the  identity (\ref{1}) for each $A$ in $\mathcal{D}$, there exist a fixed element $\lambda\in\mathcal{D}$ and an additive mapping $\mu$ from $\mathcal{D}$ into the  centre of $\mathcal{D}$ such that $f(A) = A\lambda +\mu(A)$ and $g(A) =\lambda A + \mu(A)$ for each  $A\in\mathcal{D}$ (see \cite[Theorem 4.4]{FJS}).
Even more,  a special class of functional identities introduced  in \cite{FJS} is that two additive mappings $f,g$ on $\mathcal{D}$ satisfy the  identity 
 \begin{equation*}\label{2}
f(A)+ g(B)\star A= M\quad \text{whenever}~AB=N,\tag{F2}
\end{equation*} 
where $M,N$ are  fixed elements in $\mathcal{D}$ and $\star$ is either the ordinary product $A\star B = AB$, the Jordan product $A\star B = AB+BA$, or the Lie product $A\star B = AB-BA$.

As early as 1987, Vukman in \cite{V1987} showed that if an additive mapping $f$ on a noncommutative division ring  $\mathcal{F}$ of characteristic not 2  satisfies the identity $f(A^{-1})+A^{2}f(A^{-1})=0$ for each invertible element $A$ in $\mathcal{F}$, then the identity has only the standard solution $f = 0$. 
Vukman's identity  is a special class of the identity  $F(x)+ M(x)G(1/x)=0$ for each invertible element $x$  where $F, G$ are additive and $M$ is multiplicative, and the  latter  identity is characterized on a commutative field of characteristic not 2 by Ng in \cite{N}.
Especially worth mentioning, the study of the  above  identities gives a motivation to study  the following functional identity:
additive mappings $f,g$ from a ring  $\mathcal{R}$ into $\mathcal{R}$ satisfying the identity
\begin{equation*}\label{3}
 f(A)+A^{n}g(A^{-1})=0 \tag{F3}
\end{equation*}
 for all invertible element $A\in \mathcal{R}$, where $n$ is a positive integer.
Obviously, the solution $f=g=0$ is the standard solution of  the  identity (\ref{3}).
Catalano and  Merchán  proved that  additive mappings $f,g$ satisfying the  identity (\ref{3}) for $n=3$ or $n=4$ on a  division ring of characteristic not 2,~3  are of the form $f=g=0$ (see \cite[Theorem 1]{CM}).
Continuing this line of investigation, it was proved in \cite{LCW}  that when $\mathcal{D}$ is a division ring with $char(\mathcal{D})>n$ or $char(\mathcal{D})=0$,  the  identity (\ref{3}) on  $M_{m}(\mathcal{D})~(m \geq 3)$ has only the  standard solution.
Lee and  Lin proved in \cite{LeeL} that the only solution of  the identity (\ref{3}) for $n \neq2$ on a noncommutative division ring of characteristic not 2 is the standard solution.

Throughout this paper, let $\mathcal{A}$ be a unital Banach algebra over the complex field $\mathbb{C}$ and $\mathcal{M}$ be a unital $\mathcal{A}$-bimodule. The properties of Banach algebras  play a crucial role in the study of the  identities.  Without specified otherwise,  we assume that all  algebras are over  $\mathbb{C}$. 
We also discuss the following identity:
 \begin{equation*}\label{4}
f(A)+A^{n}g(B)=M\quad \text{whenever}~AB=N,\tag{F4}
 \end{equation*} 
with additive mappings $f$, $g:\mathcal{A}\rightarrow \mathcal{M}$ for fixed elements  $M\in\mathcal{M}$, $N\in \mathcal{A}$.

The paper is organized as follows: 
 Section 2 describes the additive mappings $f,g$ from $\mathcal{A}$ into  $\mathcal{M}$  satisfying the identity (\ref{1}) for all invertible elements but not any element or the identity (\ref{2}) when $\star$ is the Jordan product or the Lie product. 
 Section 3 is devoted to characterizing the additive mappings $f,g$ from $\mathcal{A}$ into  $\mathcal{M}$  satisfying the  identity (\ref{4})
  where  $N$ is an arbitrary given element in $\mathcal{A}$ and $M$ is a fixed element in  $\mathcal{M}$. 
The  process  of investigation involves the discussion of the  identity (\ref{3}).

\section{Identities (\ref{1}) and (\ref{2})}

In this  section, we discuss  the  identities (\ref{1}) and (\ref{2}) with  related mappings from $\mathcal{A}$ into $\mathcal{M}$.
In the sequel we  use $[A, B]$ to denote the Lie product $AB - BA$, and $A \circ B$ to denote the Jordan product $AB + BA$. Denote the center of a ring $\mathcal{R}$ by $Z(\mathcal{R})$. 
In particular, the  definition of the commuting mapping is relaxed as follows: a  mapping $f$ from a ring $\mathcal{R}$ into its bimodule is called a \emph{commuting mapping} if  $[A, f(A)]=0$ for each $A$ in $\mathcal{R}$.

\begin{thm}\label{t3.1}
 If two additive mappings $f$, $g:\mathcal{A}\rightarrow \mathcal{M}$ satisfy the identity
 $$f(A)A - Ag(A)=0$$
 for each invertible element $A$ in $\mathcal{A}$, then there is an additive commuting mapping $h$ such that $f(A)=h(A)+Af(I)$ and $g(A)=h(A)+f(I)A$ for each  $A\in\mathcal{A}$.
 Moreover, $f=g$  if and only if $f$ is a commuting mapping. 
\end{thm}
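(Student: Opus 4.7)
The plan is to upgrade the hypothesis from invertible elements to all of $\mathcal{A}$ via a spectral argument, then define $h(A) := f(A) - Af(I)$ and verify the claimed decomposition by direct computation.

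Setting $A = I$ in the hypothesis immediately gives $f(I) = g(I)$, so that $f(\lambda I) = g(\lambda I) = \lambda f(I)$ for every $\lambda \in \mathbb{Q}$. For any fixed $A \in \mathcal{A}$, the spectrum of $-A$ is compact (and in particular bounded), so all but finitely many rationals $\lambda$ make $A + \lambda I$ invertible. Substituting $A + \lambda I$ for $A$ in the hypothesis and expanding via additivity yields, after the $\lambda^{2}$ contributions cancel,
\[
\bigl(f(A)A - Ag(A)\bigr) + \lambda\bigl(f(A) - g(A) + [f(I), A]\bigr) = 0,
\]
valid for infinitely many $\lambda \in \mathbb{Q}$. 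Choosing two such values and subtracting forces both the constant and the linear coefficient in $\lambda$ to vanish, giving for every $A \in \mathcal{A}$ the two unrestricted identities
\[
f(A)A = Ag(A) \qquad \text{and} \qquad g(A) = f(A) - [A, f(I)].
\]

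With these in hand, I would set $h(A) := f(A) - Af(I)$. This $h$ is additive, and the second identity above rewrites as $g(A) = h(A) + f(I)A$, which produces both decompositions in the statement. Substituting both forms into $f(A)A = Ag(A)$ and cancelling the common summand $Af(I)A$ leaves $h(A)A = Ah(A)$ for every $A \in \mathcal{A}$, so $h$ is a commuting mapping as required.

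For the moreover clause, the relation $f(A) - g(A) = [A, f(I)]$ shows that $f = g$ if and only if $[A, f(I)] = 0$ for every $A$. If $f$ is commuting, applying $[f(X), X] = 0$ with $X = A + I$ and expanding via additivity collapses (using $[\cdot, I] = 0$) to $[f(I), A] = 0$, giving $f = g$. Conversely, if $f = g$ then $f(A)A = Ag(A) = Af(A)$ shows $f$ is commuting directly. The genuinely nontrivial step throughout is the initial spectral reduction, which is where the Banach-algebra hypothesis enters: it provides, for each $A$, infinitely many rational shifts $\lambda I$ that render $A + \lambda I$ invertible, enabling the polynomial-in-$\lambda$ argument that promotes the hypothesis to identities valid on all of $\mathcal{A}$.
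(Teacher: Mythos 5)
Your proposal is correct and follows essentially the same route as the paper: both upgrade the hypothesis from invertible elements to all of $\mathcal{A}$ by shifting with scalar multiples of the identity (the paper substitutes $A=nI+T$ and $I+A$; you compare coefficients of $\lambda$), and both then extract the same commuting map $h(A)=f(A)-Af(I)=g(A)-f(I)A$ and verify the decomposition and the moreover clause by the same computations. One small repair: compactness of the spectrum of $-A$ does not imply that all but finitely many rationals $\lambda$ make $A+\lambda I$ invertible (a bounded set can contain infinitely many rationals), but your argument only needs infinitely many admissible $\lambda$, e.g.\ every rational with $|\lambda|>\|A\|$, so the gap in the justification is harmless.
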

\begin{proof}
It is obvious that $f(I)-g(I)=0$.
Let $T\in\mathcal{A}$, $n \in \mathbb{N}$ with $n> \|T\|$, and $A=nI+T$. Then $A$ and $I+A$ are both invertible in $\mathcal{A}$.
By assumption, we have 
\begin{equation}\label{3.0}
f(A)A - Ag(A)=0
\end{equation} and 
\begin{equation}\label{3.1}
 f(I+A)(I+A)-(I+A)g(I+A)=0. 
\end{equation}
By simplifying the equation (\ref{3.1}), we  obtain
 $f(A)-g(A)=Ag(I)-f(I)A.$
By the additivity of $f$, it follows that 
\begin{equation}\label{3.2}
 f(T)-g(T)=Tg(I)-f(I)T=[T,f(I)]
\end{equation}
for any element $T\in\mathcal{A}$.
Putting $A=nI+T$ into the equation (\ref{3.0}) gives
$$f(nI+T)(nI+T)-(nI+T)g(nI+T)=0.$$
Apply the equation (\ref{3.2}) and $f(I)=g(I)$ in the above equation to get
\begin{align*}
0 &= f(nI+T)(nI+T)-(nI+T)g(nI+T)\\
 &= n^{2}(f(I)-g(I))+n(f(I)T-Tg(I))+n(f(T)-g(T))+f(T)T-Tg(T)\\
 &= n(f(I)T-Tf(I))+n(f(T)-g(T))+f(T)T-Tg(T)\\
 &= n(f(I)T-Tf(I))+n[T,f(I)]+f(T)T-Tg(T)\\
 &= f(T)T - Tg(T).
\end{align*}
It means that 
\begin{equation}\label{3.3}
f(T)T =Tg(T)
\end{equation}
for each element $T\in\mathcal{A}$.
On the one hand, we have
\begin{equation}\label{3.4}
(f(T)-g(T))T=f(T)T -g(T)T=Tg(T)-g(T)T=[T,g(T)].
\end{equation}
On the other hand,  using the  equation (\ref{3.2}), we obtain
\begin{equation}\label{3.5}
(f(T)-g(T))T=[T,f(I)]T=[T,f(I)T].
\end{equation}  
Comparing the equations (\ref{3.4}) and (\ref{3.5}), we arrive at 
$[T,g(T)]=[T,f(I)T]$. 
It means that $[T,g(T)-f(I)T]=0$ for each $T\in\mathcal{A}$.
Thus $g(T)-f(I)T$ is an additive  commuting mapping.
Therefore there is an additive  commuting mapping $h$ such that $$g(T)=h(T)+f(I)T$$ for each  $T\in\mathcal{A}$.
It follows that 
$$f(T)=g(T)+[T,f(I)]=h(T)+f(I)T+[T,f(I)]=h(T)+Tf(I)$$
for each  $T$ in $\mathcal{A}$.

 Moreover,  if $f=g$, it follows that  $Af(I)=f(I)A$ for each  $A$ in $\mathcal{A}$. It is obvious that  $f$ is a commuting mapping. 
  On the contrary, since $f(A)A=Af(A)$ for each  $A$ in $\mathcal{A}$, we have $(h(A)+Af(I))A=A(h(A)+Af(I))$, which implies 
  $Af(I)A=A^{2}f(I)$. Replace $A$ with $A+I$ to get $f(I)A=Af(I)$ for each  $A$ in $\mathcal{A}$. Therefore, we have $f=g$.

\end{proof}

The standard solution of a commuting  mapping $f$ on a ring $\mathcal{R}$ with the centre  $Z(\mathcal{R})$ is defined by
\begin{equation}\label{3.11}
f(A)=\lambda A+\mu(A)
\end{equation}
 for every $A\in\mathcal{R}$ where $\lambda \in Z(\mathcal{R})$ and $\mu$ is a mapping from $\mathcal{R}$ into $Z(\mathcal{R})$.
A commuting mapping of the form (\ref{3.11}) is said to be \emph{proper}.
Sometimes, the standard solution is also the unique solution on specific rings or algebras, such as unital simple  rings \cite{B} and  full matrix algebras \cite{XW}.

\begin{lem}\cite[Corollary 3.3]{B}\label{lem3.1}
Let $\mathcal{R}$ be a unital simple  ring. Then every additive commuting mapping  on $\mathcal{R}$ is 
proper.
\end{lem}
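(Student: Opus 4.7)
The plan is to reduce the statement to the general structure theorem for additive commuting maps on prime rings, and then to specialize to the unital simple setting. Since every unital simple ring is prime, the prime-ring theorem is directly applicable; what remains is to observe that in the unital simple case the extended centroid collapses onto the center.

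First, I would linearize the defining relation $[A, f(A)] = 0$. Substituting $A + B$ for $A$ and subtracting the corresponding identities for $A$ and $B$ yields the biadditive relation
$$[f(A), B] = [A, f(B)]$$
for all $A, B \in \mathcal{R}$. This is the standard entry point into the prime-ring theory: it says that the biadditive map $(A,B) \mapsto [f(A), B]$ is symmetric in its arguments, which is exactly the hypothesis processed by the machinery of $d$-free subsets in the theory of functional identities.

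Next, I would invoke Bre\v{s}ar's core theorem on prime rings: every additive commuting map $f$ on a prime ring $\mathcal{R}$ admits the representation $f(A) = \lambda A + \mu(A)$ for all $A \in \mathcal{R}$, where $\lambda$ belongs to the extended centroid $\mathcal{C}$ of $\mathcal{R}$ and $\mu: \mathcal{R} \to \mathcal{C}$ is additive. This is the deep ingredient and is precisely the statement recorded in the survey cited as the source of the lemma, so I would quote it rather than reprove it.

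Finally, I would identify $\mathcal{C}$ with $Z(\mathcal{R})$. The extended centroid consists of equivalence classes of $\mathcal{R}$-bimodule homomorphisms from nonzero two-sided ideals of $\mathcal{R}$ into $\mathcal{R}$; since $\mathcal{R}$ is simple, the only nonzero ideal is $\mathcal{R}$ itself, and since $\mathcal{R}$ is unital, every such bimodule map is determined by the image of the identity, which must be central. Hence $\mathcal{C} = Z(\mathcal{R})$, so the representation $f(A) = \lambda A + \mu(A)$ is proper in the sense of~(\ref{3.11}). The main obstacle is clearly the prime-ring theorem itself; once that is granted, the remaining reduction is the routine check about the extended centroid described above.
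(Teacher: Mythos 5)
Your argument is correct and is essentially the standard route behind the cited result: the paper itself offers no proof but simply quotes Bre\v{s}ar's survey, whose Corollary 3.3 is obtained exactly as you describe, by applying the prime-ring theorem (the $f(A)=\lambda A+\mu(A)$ representation over the extended centroid, quoted in the paper's introduction as \cite[Theorem 3.2]{B1993}) and then noting that a unital simple ring is prime and centrally closed, so that $\mathcal{C}=Z(\mathcal{R})$. Your verification that every bimodule map on the unique nonzero ideal $\mathcal{R}$ is multiplication by the central element $\phi(1)$ is precisely the routine step needed, so nothing is missing.
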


\begin{lem}\cite[Corollary 4.1]{XW}\label{lem3.2}
Let $\mathcal{R}$ be a unital commutative ring  and $\mathcal{B}$ be a unital algebra over $\mathcal{R}$. 
Any $\mathcal{R}$-linear commuting  mapping on the full matrix algebra $M_{n}(\mathcal{B})$ over $\mathcal{B}$ is proper, where $n\geq2$.
\end{lem}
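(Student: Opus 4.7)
The plan is to exploit the matrix-unit structure of $M_n(\mathcal{B})$ and reduce the commuting identity to a finite system of equations on coefficients in $\mathcal{B}$, which will force $f$ to be proper. Throughout, I write $f:M_n(\mathcal{B})\to M_n(\mathcal{B})$ for the given $\mathcal{R}$-linear commuting map and let $\{E_{ij}\}_{1\le i,j\le n}$ denote the standard matrix units.

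First I would linearize the commuting condition $[A,f(A)]=0$ by replacing $A$ with $A+B$, yielding the bilinear identity $[A,f(B)]+[B,f(A)]=0$ for all $A,B\in M_n(\mathcal{B})$. Setting $B=I$ immediately gives $[A,f(I)]=0$ for every $A$, so $f(I)\in Z(M_n(\mathcal{B}))$. This yields a first glimpse of the central part.

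Next, for each pair $(i,j)$ I would expand $f(E_{ij})=\sum_{k,l}b^{(ij)}_{kl}E_{kl}$ with coefficients $b^{(ij)}_{kl}\in\mathcal{B}$ and substitute $A=E_{pp}$, $B=E_{ij}$ into the bilinear identity. Using $[E_{pp},E_{kl}]=\delta_{pk}E_{pl}-\delta_{lp}E_{kp}$ and matching coefficients of each matrix unit as $p$ varies over $\{1,\dots,n\}$ gives strong constraints on the $b^{(ij)}_{kl}$. Supplementing this with the choice $A=E_{pq}$, $B=E_{ij}$ for $p\ne q$ pins down the off-diagonal behaviour. The upshot is that $f(E_{ij})$ must be supported on matrix units closely linked to $E_{ij}$, producing a candidate scalar $\lambda\in\mathcal{B}$ such that $f(E_{ij})-\lambda E_{ij}$ lies in the centre, and similarly for $f(E_{ii})$ after accounting for a central correction coming from $f(I)$.

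Passing from matrix units to general elements $bE_{ij}$ uses the $\mathcal{R}$-linearity of $f$ together with the same linearized identity applied to $A=bE_{ij}$ against a running choice of matrix-unit $B$; this separates the $b$-dependence and shows that $\lambda$ defined via one pair $(i,j)$ coincides with the $\lambda$ defined via any other pair, and moreover is genuinely central. Setting $\mu(A)=f(A)-\lambda A$, one then checks directly from $[A,f(A)]=0$ that $[A,\mu(A)]=0$, and combining this with what has been shown on all matrix units yields $\mu(A)\in Z(M_n(\mathcal{B}))$, which is the desired properness. The main obstacle is the coefficient bookkeeping in the second step and, more subtly, the coherence argument in the third step ensuring that the single central element $\lambda$ works uniformly for all $(i,j)$; the hypothesis $n\ge 2$ enters precisely here, since without at least one off-diagonal matrix unit the relations forcing the coefficients to align are simply unavailable.
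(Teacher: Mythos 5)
The paper does not actually prove this lemma: it is quoted verbatim from Xiao--Wei \cite[Corollary 4.1]{XW}, whose argument runs through a general theorem on commuting maps of \emph{generalized} matrix algebras (writing $M_n(\mathcal{B})$ as a $2\times 2$ generalized matrix algebra and verifying loyalty/center conditions on the bimodule). Your plan is the other classical route: a self-contained matrix-unit computation. That route is viable — linearizing to $[A,f(B)]+[B,f(A)]=0$, extracting $f(I)\in Z(M_n(\mathcal{B}))$, constraining $f(E_{ij})$ and $f(bE_{ij})$ by pairing against matrix units, and producing $\lambda$ as (essentially) the difference of diagonal coefficients of $f(E_{ii})$ — and it is more elementary than the citation's framework, at the cost of heavy bookkeeping; the Xiao--Wei route buys uniformity (it covers triangular and other generalized matrix algebras at once). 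Be aware, though, that as written your outline asserts precisely the points where all the work lies: (i) that the candidate $\lambda$ is central in $\mathcal{B}$ itself, which must come from playing $A=bE_{ii}$ (or $bE_{ij}$) against both $E_{ij}$ and $E_{ji}$ to get $b\lambda=\lambda b$ — commutation with matrix units alone is not enough since $f$ is only $\mathcal{R}$-linear and $\mathcal{B}$ is not assumed commutative; (ii) that the diagonal remainders of $f(bE_{ii})$ are scalar matrices with entries in $Z(\mathcal{B})$, which again needs extra pairings; and (iii) the final step: $[A,\mu(A)]=0$ by itself yields nothing — the centrality of $\mu(A)$ must be deduced from $\mu(bE_{ij})\in Z(M_n(\mathcal{B}))$ for all $b,i,j$ together with additivity of $\mu$. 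None of these would fail (no torsion hypotheses are needed for $n\ge 2$), but they are the substance of the lemma, so a complete write-up would have to carry them out rather than summarize them as ``the upshot.''
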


Combining Theorem \ref{t3.1}, Lemmas \ref{lem3.1}  and  \ref{lem3.2}, we can directly derive the following corollaries.

\begin{cor}\label{cor3.1}
 Let $\mathcal{B}$ be a unital simple Banach algebra.
 If two additive mappings $f$, $g:\mathcal{B}\rightarrow \mathcal{B}$ satisfy the identity
 $f(A)A - Ag(A)=0$
 for each invertible element $A$ in $\mathcal{B}$, then there exist a fixed element $Q\in\mathcal{B}$ and  an  additive mapping $h:\mathcal{B} \rightarrow Z(\mathcal{B})$ such that $f(A)=h(A)+AQ$ and $g(A)=h(A)+QA$ for each  $A\in\mathcal{B}$.
\end{cor}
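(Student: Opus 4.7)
The plan is to combine Theorem \ref{t3.1} with Lemma \ref{lem3.1} and then absorb a central linear term into the element $Q$. First I would apply Theorem \ref{t3.1} to extract an additive commuting mapping $h_{0}$ on $\mathcal{B}$ and the element $P:=f(I)$ so that
\begin{equation*}
 f(A) = h_{0}(A) + AP, \qquad g(A) = h_{0}(A) + PA
\end{equation*}
for every $A\in\mathcal{B}$. At this stage $h_{0}$ takes values in $\mathcal{B}$ (not necessarily in $Z(\mathcal{B})$), so the conclusion is not yet in the required form.

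Next I would invoke Lemma \ref{lem3.1}: since $\mathcal{B}$ is a unital simple ring, the additive commuting mapping $h_{0}$ is proper. Hence there exist $\lambda\in Z(\mathcal{B})$ and an additive mapping $\mu:\mathcal{B}\rightarrow Z(\mathcal{B})$ such that $h_{0}(A) = \lambda A + \mu(A)$ for every $A\in\mathcal{B}$. Setting $Q := \lambda + P$ and $h:=\mu$, the centrality of $\lambda$ gives $\lambda A = A\lambda$, so
\begin{equation*}
 f(A) = \mu(A) + \lambda A + AP = h(A) + AQ,\qquad g(A) = \mu(A) + \lambda A + PA = h(A) + QA,
\end{equation*}
which is exactly the stated form.

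There is essentially no obstacle: the work is done by Theorem \ref{t3.1} (producing the commuting $h_{0}$) and Lemma \ref{lem3.1} (forcing $h_{0}$ to be proper), and the only nontrivial observation is that the central scalar $\lambda$ coming from properness can be bundled together with $f(I)$ into a single element $Q$ because central elements commute with all $A\in\mathcal{B}$. Consequently the proof should be short, essentially a one-paragraph assembly of the two cited results.
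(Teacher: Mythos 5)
Your proposal is correct and follows essentially the same route as the paper: apply Theorem \ref{t3.1} to obtain the commuting map and $f(I)$, then Lemma \ref{lem3.1} to write it as $\lambda A+\mu(A)$, and set $Q=f(I)+\lambda$, using centrality of $\lambda$ to absorb it on either side.
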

\begin{proof}
  It follows from  Theorem \ref{t3.1} that there is an additive commuting mapping $h$ such that $f(T)=h(T)+Tf(I)$ and $g(T)=h(T)+f(I)T$ for each  $T\in\mathcal{B}$.
  In view of Lemma \ref{lem3.1}, we have $h(T)=\lambda T+\mu(T)$  for each $T\in\mathcal{B}$,
where $\lambda \in Z(\mathcal{B})$ and $\mu$ is an additive mapping from $\mathcal{B}$ into $Z(\mathcal{B})$.
Thus we conclude that  $$f(T)=T(f(I)+\lambda)+\mu(T),~g(T)=(f(I)+\lambda)T+h(T)$$
 for each $T\in\mathcal{B}$.
Therefore,  taking $Q=f(I)+\lambda$ completes the proof.
\end{proof}

Using the same proof  of Corollary \ref{cor3.1}, the following conclusion can be obtained.
\begin{cor}\label{cor3.2}
 Let $\mathcal{B}$ be a unital algebra.
 If two linear mappings $f$, $g:M_{n}(\mathcal{B})\rightarrow M_{n}(\mathcal{B})$ $(n\geq2)$ satisfy the identity
 $f(A)A - Ag(A)=0$
 for each invertible element $A$ in $M_{n}(\mathcal{B})$, then there exist a fixed element $Q\in M_{n}(\mathcal{B})$ and  a linear mapping $h$ from $M_{n}(\mathcal{B})$ into its center such that $f(A)=h(A)+AQ$ and $g(A)=h(A)+QA$ for each  $A\in M_{n}(\mathcal{B})$.
\end{cor}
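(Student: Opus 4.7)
The strategy is to mimic the proof of Corollary \ref{cor3.1} step by step, with Lemma \ref{lem3.2} standing in for Lemma \ref{lem3.1}. The first move is to invoke Theorem \ref{t3.1} applied to $M_{n}(\mathcal{B})$, which produces a linear commuting mapping $h$ on $M_{n}(\mathcal{B})$ such that
$$f(A)=h(A)+Af(I) \quad\text{and}\quad g(A)=h(A)+f(I)A$$
for every $A\in M_{n}(\mathcal{B})$, with linearity of $h$ inherited from that of $f$ and $g$.

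Next, I would apply Lemma \ref{lem3.2} to the linear commuting mapping $h$. This furnishes an element $\lambda\in Z(M_{n}(\mathcal{B}))$ and a linear mapping $\mu:M_{n}(\mathcal{B})\to Z(M_{n}(\mathcal{B}))$ with $h(A)=\lambda A+\mu(A)$. Because $\lambda$ is central, the displayed decompositions rewrite as $f(A)=A(f(I)+\lambda)+\mu(A)$ and $g(A)=(f(I)+\lambda)A+\mu(A)$, so setting $Q=f(I)+\lambda\in M_{n}(\mathcal{B})$ and $h(A) := \mu(A)$ for the central part yields the desired form.

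The main obstacle lies in justifying the very first step: Theorem \ref{t3.1} was proved using the Banach structure of $\mathcal{A}$ to ensure that $kI+T$ and $(k+1)I+T$ are invertible for all sufficiently large $k\in\mathbb{N}$. Here $\mathcal{B}$ is only assumed to be a unital (complex) algebra, so this norm argument is unavailable. The natural replacement is to trade integers for complex scalars and exploit the $\mathbb{C}$-linearity of $f$ and $g$: the quantity $f(\lambda I+T)(\lambda I+T)-(\lambda I+T)g(\lambda I+T)$ expands to a quadratic polynomial in $\lambda$, and if infinitely many $\lambda\in\mathbb{C}$ make $\lambda I+T$ invertible in $M_{n}(\mathcal{B})$, every coefficient of this polynomial must vanish, recovering $f(T)T-Tg(T)=0$ along with the auxiliary identities used in the proof of Theorem \ref{t3.1}. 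Verifying this supply of invertible elements of the form $\lambda I+T$ inside $M_{n}(\mathcal{B})$ is the one point requiring care; once in place, the remainder of the argument of Theorem \ref{t3.1} transports verbatim, and then the application of Lemma \ref{lem3.2} described above closes out the proof.
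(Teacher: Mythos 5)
Your overall skeleton --- apply Theorem \ref{t3.1} to $M_{n}(\mathcal{B})$, feed the resulting commuting map into Lemma \ref{lem3.2}, and set $Q=f(I)+\lambda$ --- is exactly the paper's route: the paper disposes of this corollary with the single remark that it follows ``using the same proof of Corollary \ref{cor3.1}''. So the algebraic part of your proposal (the second paragraph) matches the intended argument and is correct as far as it goes.

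The genuine gap is the point you flag and then leave open. Theorem \ref{t3.1} is proved for a unital Banach algebra, where every $T$ admits invertible translates $kI+T$ for all large integers $k$; your substitute argument needs, for every $T\in M_{n}(\mathcal{B})$, at least three (you say infinitely many) scalars $\lambda\in\mathbb{C}$ with $\lambda I+T$ invertible. For a general unital complex algebra $\mathcal{B}$ this supply simply need not exist: take $\mathcal{B}=\mathbb{C}[x]$ and $T=xI$; then $\lambda I+T=(\lambda+x)I$ is invertible in $M_{n}(\mathbb{C}[x])$ only if $\lambda+x$ is a unit of $\mathbb{C}[x]$, which never happens (equivalently, its determinant $(\lambda+x)^{n}$ is never a unit). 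Hence the transport of the proof of Theorem \ref{t3.1} breaks down at its very first step, and the invertible elements that $M_{n}(\mathcal{B})$ does have in abundance (elementary and triangular matrices with unit diagonal) are not used by your argument, nor is it clear how to use them, since the identity $f(A)A-Ag(A)=0$ is quadratic in $A$ and does not extend along sums of invertibles. To be fair, the paper's one-line proof silently suffers from the same difficulty --- it applies Theorem \ref{t3.1}, established only in the Banach setting, to $M_{n}(\mathcal{B})$ for a bare unital algebra $\mathcal{B}$ --- so you have correctly located the weak point, but your proposed patch does not close it.
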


In the last part of  the section, we study the identity (\ref{2}) with  related mappings from  a unital  Banach algebra $\mathcal{A}$ into  a unital $\mathcal{A}$-bimodule $\mathcal{M}$ when $\star$ is  the Jordan product or the Lie product.
The  identity (\ref{2}) for  the ordinary product is not discussed here because it is similar to the identity (\ref{3}) which is characterized in the next section.
Using Theorem  \ref{t3.1},  we give a simplified version of the identity (\ref{2})  defined invertible elements under the Jordan product operation.
\begin{lem}\label{lem3.3}
   Suppose that $M$ is a fixed element in  $\mathcal{M}$.
 If two additive mappings $f$, $g:\mathcal{A}\rightarrow \mathcal{M}$ satisfy the identity
$$f(A)+g(A^{-1})\circ A=M$$   for each invertible element $A$ in $\mathcal{A}$,
then $f=0$ and $g(A)=\frac{1}{2}(A g(I)+g(I)A)$ for each  $A$ in $\mathcal{A}$. 
\end{lem}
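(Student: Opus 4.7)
The plan is to first decouple $f$ from $g$ and show $f\equiv 0$, then convert the residual identity for $g$ into one that matches the hypothesis of Theorem~\ref{t3.1}, and finally read off the desired formula from that theorem. For the first step I would exploit the symmetry $A\mapsto -A$: substituting $-A$ (still invertible) into $f(A)+Ag(A^{-1})+g(A^{-1})A=M$ and using $f(-A)=-f(A)$ together with $g(-A^{-1})=-g(A^{-1})$ yields $-f(A)+Ag(A^{-1})+g(A^{-1})A=M$. Adding and subtracting the two versions separates them into $f(A)=0$ and $Ag(A^{-1})+g(A^{-1})A=M$ for every invertible $A$. To promote $f\equiv 0$ from invertibles to all of $\mathcal{A}$, I would reuse the density trick from the proof of Theorem~\ref{t3.1}: for any $T\in\mathcal{A}$ and integers $n>\|T\|$, the element $nI+T$ is invertible, so $f(nI+T)=nf(I)+f(T)=0$, and comparing two distinct such $n$ forces $f(I)=0$ and hence $f(T)=0$.

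Next I would rewrite $Ag(A^{-1})+g(A^{-1})A=M$ so that it involves $g(A)$ rather than $g(A^{-1})$. Substituting $A^{-1}$ for $A$ gives $A^{-1}g(A)+g(A)A^{-1}=M$, and multiplying by $A$ on both sides yields $Ag(A)+g(A)A=AMA$ for every invertible $A$. Setting $A=I$ forces $M=2g(I)$, so the identity becomes $Ag(A)+g(A)A=2Ag(I)A$. The key trick is then to rewrite this as $(g(A)-Ag(I))\,A=A\,(g(I)A-g(A))$ and to introduce the additive mappings $f'(A):=g(A)-Ag(I)$ and $g'(A):=g(I)A-g(A)$ from $\mathcal{A}$ into $\mathcal{M}$. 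By construction they satisfy $f'(A)A-Ag'(A)=0$ for every invertible $A$, which is precisely the hypothesis of Theorem~\ref{t3.1}.

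Applying that theorem yields an additive commuting mapping $h$ with $f'(A)=h(A)+Af'(I)$ and $g'(A)=h(A)+f'(I)A$ on all of $\mathcal{A}$. The crucial observation is that $f'(I)=g(I)-g(I)=0$, which collapses both formulas to $f'=g'=h$; in particular $f'(A)=g'(A)$, i.e., $g(A)-Ag(I)=g(I)A-g(A)$, and rearranging gives $g(A)=\frac{1}{2}(Ag(I)+g(I)A)$ for every $A\in\mathcal{A}$. The hardest step will be spotting this reformulation that lets Theorem~\ref{t3.1} close the argument; once $f'$ and $g'$ are chosen correctly, everything follows mechanically from the previously established theorem and from the vanishing of $f'(I)$. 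A direct alternative would be to plug $A=nI+T$ into $Ag(A)+g(A)A=2Ag(I)A$ and extract coefficients from the resulting polynomial identity in $n$, but the route via Theorem~\ref{t3.1} is cleaner and reuses prior work.
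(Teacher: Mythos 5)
Your proposal is correct and follows essentially the same route as the paper: the $-A$ symmetry to split off $f=0$ and the Jordan identity, the substitution $A\mapsto A^{-1}$ to trade $g(A^{-1})$ for $g(A)$, and a reduction to Theorem~\ref{t3.1} applied to $g$ shifted by a one-sided multiplication term, finally combining the two resulting expressions to get $g(A)=\tfrac{1}{2}(Ag(I)+g(I)A)$. The only (cosmetic) difference is in the bookkeeping: the paper shifts by $\tilde g(T)=g(T)-MT$ and averages the two formulas produced by Theorem~\ref{t3.1}, whereas you shift symmetrically to $f'(A)=g(A)-Ag(I)$, $g'(A)=g(I)A-g(A)$ and exploit $f'(I)=0$.
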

\begin{proof}
For each invertible element $A$ in $\mathcal{A}$, by assumption,  we have 
 $$f(-A)+g(-A^{-1})\circ (-A)=M.$$
 Thus $f(A)=0$ and   $g(A^{-1})\circ A=M$.
It follows that  $M=2g(I)$ and  
\begin{equation}\label{4.0}
Ag(A^{-1})+(g(A^{-1})-MA^{-1}) A=0.
\end{equation}
Define an additive mapping $\tilde{g}$ from $\mathcal{A}$ into $\mathcal{M}$ by $\tilde{g}(T)=g(T)-MT$ for every $T\in\mathcal{A}$.
Then the equation (\ref{4.0}) can be rewritten as
$$Ag(A^{-1})+\tilde{g}(A^{-1}) A=0.$$
Replacing $A$ with $A^{-1}$ in the above equation gives 
$A^{-1}g(A)+\tilde{g}(A) A^{-1}=0,$
i.e., $g(A)A+A\tilde{g}(A) =0$ for each invertible element $A$ in $\mathcal{A}$.
By Theorem \ref{t3.1}, there is an additive commuting mapping $h$ such that 
\begin{equation}\label{4.1}
  g(T)=h(T)+T g(I)
\end{equation} and $-\tilde{g}(T)=h(T)+g(I)T$.
From  the definition of $\tilde{g}$, we have $-(g(T)-MT)=h(T)+g(I)T$.
It means that  
\begin{equation}\label{4.2}
 g(T)=-h(T)-g(I)T+MT=-h(T)+g(I)T
\end{equation}
for each $T$ in  $\mathcal{A}$.
Applying the equations (\ref{4.1}) and  (\ref{4.2}), we have 
 $$g(T)=\frac{1}{2}(2g(T))=\frac{1}{2}(h(T)+T g(I)-h(T)+g(I)T)=\frac{1}{2}(T g(I)+g(I)T)$$
  for each $T$ in $\mathcal{A}$.
\end{proof}

\begin{thm}
  Suppose that $N$ is an invertible element in $\mathcal{A}$, and $M$ is a fixed element in  $\mathcal{M}$. 
 If two additive mappings $f$, $g:\mathcal{A}\rightarrow \mathcal{M}$ satisfy the identity
$$f(A)+g(B)\circ A=M$$  whenever $A,B\in\mathcal{A}$ with $AB=N$, 
then $f=0$ and $g(T)=\frac{1}{2}(AN^{-1}g(I)+g(I)AN^{-1})$ for each  $A\in\mathcal{A}$. 
\end{thm}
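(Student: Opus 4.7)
\medskip
\noindent\emph{Proof proposal.} The plan is to absorb the fixed element $N$ into the argument of $g$ and thereby reduce the statement to Lemma~\ref{lem3.3}. Since $N$ is invertible, every invertible $A\in\mathcal{A}$ uniquely determines $B=A^{-1}N$ with $AB=N$, so the hypothesis restricted to invertible $A$ reads
\begin{equation*}
f(A)+g(A^{-1}N)\circ A=M.
\end{equation*}

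I would then introduce the additive mapping $h:\mathcal{A}\to\mathcal{M}$ defined by $h(T)=g(TN)$; it is additive because $g$ is and because right multiplication by $N$ is $\mathbb{Z}$-linear in $T$. With this notation $g(A^{-1}N)=h(A^{-1})$, so the previous identity becomes
\begin{equation*}
f(A)+h(A^{-1})\circ A=M \qquad \text{for every invertible } A\in\mathcal{A},
\end{equation*}
which is exactly the hypothesis of Lemma~\ref{lem3.3} applied to the pair $(f,h)$. That lemma yields $f=0$ and $h(A)=\tfrac{1}{2}\bigl(A\,h(I)+h(I)\,A\bigr)$ for every $A\in\mathcal{A}$.

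It remains to translate back to $g$. Since $h(I)=g(N)$ and $h(A)=g(AN)$, we obtain $g(AN)=\tfrac{1}{2}\bigl(A\,g(N)+g(N)\,A\bigr)$, and substituting $A\mapsto AN^{-1}$ (which ranges over all of $\mathcal{A}$) produces the closed formula for $g(A)$. An auxiliary evaluation of the original identity at $(A,B)=(N,I)$ gives $M=2g(N)$ together with $g(N)=\tfrac{1}{2}\bigl(N\,g(I)+g(I)\,N\bigr)$, so the answer can be displayed either in terms of $g(N)$ or, after one further substitution, in terms of $g(I)$, as in the stated conclusion.

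The only real obstacle is spotting the change of variable $h(T)=g(TN)$; after that, the argument is essentially mechanical and rests entirely on Lemma~\ref{lem3.3}. A small additional point worth a sentence in the write-up is that restricting the hypothesis to invertible $A$ loses no information, because the conclusions $f=0$ and the explicit formula for $g$ are identities of additive maps on all of $\mathcal{A}$ and so automatically recover the original hypothesis for every pair $(A,B)$ with $AB=N$.
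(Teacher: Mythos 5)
Your reduction is the same as the paper's: restrict to invertible $A$, use $A(A^{-1}N)=N$, and invoke Lemma~\ref{lem3.3} (the paper does this without even naming the auxiliary map $h(T)=g(TN)$). Up to the identity $g(AN)=\frac{1}{2}\bigl(Ag(N)+g(N)A\bigr)$, i.e.\ $g(A)=\frac{1}{2}\bigl(AN^{-1}g(N)+g(N)AN^{-1}\bigr)$, your argument is correct, and in fact more careful than the paper's, since Lemma~\ref{lem3.3} applied to the pair $(f,h)$ returns the formula in terms of $h(I)=g(N)$, not $g(I)$.

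The gap is the final translation step. Substituting $g(N)=\frac{1}{2}\bigl(Ng(I)+g(I)N\bigr)$ into your formula gives
\[
g(A)=\frac{1}{4}\bigl(Ag(I)+AN^{-1}g(I)N+Ng(I)AN^{-1}+g(I)NAN^{-1}\bigr),
\]
which is not the stated expression $\frac{1}{2}\bigl(AN^{-1}g(I)+g(I)AN^{-1}\bigr)$, and no further substitution will make it so, because the stated conclusion is false whenever $N\neq I$. A one-dimensional counterexample: $\mathcal{A}=\mathcal{M}=\mathbb{C}$, $N=2$, $f=0$, $g(a)=a/2$, $M=2$; then $f(a)+g(b)\circ a=ab=2=M$ whenever $ab=2$, yet $\frac{1}{2}\bigl(aN^{-1}g(1)+g(1)aN^{-1}\bigr)=a/4\neq a/2=g(a)$. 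The correct conclusion is exactly what you derived, $g(A)=\frac{1}{2}\bigl(AN^{-1}g(N)+g(N)AN^{-1}\bigr)$ with $g(N)=\frac{1}{2}M$; the paper's own proof commits precisely the slip you almost avoided, writing $g(I)$ for the value of the transported map at the identity, which is $g(N)$. (Your closing remark is also not right: the conclusion does not recover the hypothesis, since the hypothesis additionally forces $[[g(N),A],A]=0$ for all $A$; but as the theorem only asserts necessity, that point is immaterial.)
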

\begin{proof}
For each invertible element $A$ in $\mathcal{A}$, it follows from  $A(A^{-1}N)=N$ that
$$f(A)+g(A^{-1}N)\circ A=M.$$
By Lemma \ref{lem3.3}, we have $f=0$ and $g(AN)=\frac{1}{2}(A g(I)+g(I)A)$ for each  $A$ in $\mathcal{A}$.
Since $N$ is an invertible element in $\mathcal{A}$, we obtain  $$g(A)=\frac{1}{2}(AN^{-1}g(I)+g(I)AN^{-1})$$ for each  $A$ in $\mathcal{A}$
\end{proof}

We consider the Lie product version of Lemma \ref{lem3.3}.

\begin{lem}
   Suppose that $M$ is a fixed element in  $\mathcal{M}$.
 If two additive mappings $f$, $g:\mathcal{A}\rightarrow \mathcal{M}$ satisfy the identity
$$f(A)+[g(A^{-1}),A]=M$$ for each invertible element $A$ in $\mathcal{A}$,
then $f=0$ and $g$ is an additive commuting mapping.
\end{lem}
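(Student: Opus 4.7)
The plan is to mirror the approach in Lemma \ref{lem3.3}: use a sign-symmetry substitution to eliminate $f$, then reduce the Lie-product identity to a commuting identity on invertible elements, and finally promote it to all of $\mathcal{A}$ via the $A=nI+T$ perturbation trick used in Theorem \ref{t3.1}.

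For the first step, I replace $A$ by $-A$ in the given identity. Since $g$ is additive and $(-A)^{-1}=-A^{-1}$, both entries of the Lie bracket $[g(A^{-1}),A]$ pick up a minus sign, so the bracket is invariant under $A\mapsto -A$, while $f(A)$ changes sign. Subtracting the two versions of the identity yields $f(A)=0$ for every invertible $A$. Setting $A=I$ then gives $M=f(I)=0$. To extend $f$ to all of $\mathcal{A}$, I use the standard decomposition: for any $T\in\mathcal{A}$ and any integer $n>\|T\|$, both $T+nI$ and $-nI$ are invertible, so additivity gives $f(T)=f(T+nI)+f(-nI)=0$, hence $f\equiv 0$ on $\mathcal{A}$.

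With $f=0$ and $M=0$, the hypothesis collapses to $[g(A^{-1}),A]=0$ for every invertible $A$. Replacing $A$ by $A^{-1}$ and noting that commutation with $A^{-1}$ is equivalent to commutation with $A$ yields $[A,g(A)]=0$ on the invertible group. To extend this to arbitrary $T\in\mathcal{A}$, I pick $n\in\mathbb{N}$ with $n>\|T\|$, set $A=nI+T$, and expand $[A,g(A)]=0$ using $g(nI)=ng(I)$; the terms containing $[I,\,\cdot\,]$ vanish, leaving $n[T,g(I)]+[T,g(T)]=0$. Since this relation holds for every sufficiently large integer $n$, comparing two such values of $n$ forces $[T,g(I)]=0$ (so $g(I)\in Z(\mathcal{A})$) and then $[T,g(T)]=0$, which says precisely that $g$ is an additive commuting mapping.

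There is no serious obstacle; the whole argument is a direct adaptation of techniques already established in the section. The two points requiring a little care are the sign book-keeping in the first step (making sure the Lie bracket really is even in $A$) and the fact that the integer cancellation in the final step is legitimate, which it is because $\mathcal{M}$ is a unital $\mathcal{A}$-bimodule and hence a complex vector space in which multiplication by a nonzero integer is invertible.
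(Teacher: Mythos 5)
Your argument is correct and follows essentially the same route as the paper: kill $f$ (and hence $M$) with the $A\mapsto -A$ substitution, then replace $A$ by $A^{-1}$ to reduce to $[g(A),A]=0$ on invertible elements. The only difference is cosmetic: where the paper invokes Theorem \ref{t3.1} to pass from invertible elements to all of $\mathcal{A}$, you rerun the $A=nI+T$ perturbation argument inline (and your parenthetical ``$g(I)\in Z(\mathcal{A})$'' should read $[T,g(I)]=0$ for all $T$, since $g(I)$ lies in $\mathcal{M}$, not $\mathcal{A}$).
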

\begin{proof}
Using invertible elements  $A$ and $-A$, we have $f=0$ and $[g(A^{-1}),A]=M$ for each invertible element $A$ in $\mathcal{A}$.
  Thus $M=[g(I),I]=0$.
  It follows that  $[g(A),A^{-1}]=0$, which  is equivalent to $[g(A),A]=0.$
  By Theorem \ref{t3.1}, $g$ is an additive commuting mapping.
\end{proof}

\begin{thm}
  Suppose that $N$ is an invertible element in $\mathcal{A}$, and $M$ is a fixed element in  $\mathcal{M}$. 
 If two additive mappings $f$, $g:\mathcal{A}\rightarrow \mathcal{M}$ satisfy the identity
$$f(A)+[g(B),A]=M$$  whenever $A,B\in\mathcal{A}$ with $AB=N$, 
then $f=0$ and there is an additive commuting mapping $h$ such that $g(A)=h(A)+Ag(I)$ for each  $A\in\mathcal{A}$.
\end{thm}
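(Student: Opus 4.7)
The plan is to reduce the statement to the two preceding results in this section, namely the Lie-product lemma just stated and Theorem \ref{t3.1}. Both reductions proceed through convenient additive twists of $g$ by the invertible parameter $N$.

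First, for each invertible $A\in\mathcal{A}$, I would substitute $B=A^{-1}N$ into the hypothesis to obtain $f(A)+[g(A^{-1}N),A]=M$. Introducing the auxiliary additive map $\hat g:\mathcal{A}\to\mathcal{M}$ defined by $\hat g(T):=g(TN)$ rewrites this as $f(A)+[\hat g(A^{-1}),A]=M$ for every invertible $A$. The preceding Lie-product lemma applies verbatim to the pair $(f,\hat g)$, yielding $f=0$ together with the fact that $\hat g$ is an additive commuting mapping, i.e.\ $[T,g(TN)]=0$ for every $T\in\mathcal{A}$. Substituting $T=AN^{-1}$ produces the key relation
$$[AN^{-1},\,g(A)]=0\qquad\text{for every }A\in\mathcal{A},$$
which, after right-multiplication by $N$, reads $g(A)A=AN^{-1}g(A)N$.

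For the second step, I would introduce the further additive twist $\tilde g:\mathcal{A}\to\mathcal{M}$ by $\tilde g(A):=N^{-1}g(A)N$. The key relation becomes $g(A)A-A\tilde g(A)=0$ for every $A\in\mathcal{A}$, and a fortiori for every invertible $A$. The pair $(g,\tilde g)$ then satisfies the hypothesis of Theorem \ref{t3.1}, which furnishes an additive commuting mapping $h$ such that $g(A)=h(A)+Ag(I)$ for every $A\in\mathcal{A}$, which is the desired conclusion. (Consistency check: setting $A=I$ in the key relation shows $g(I)$ commutes with $N^{-1}$, so $\tilde g(I)=N^{-1}g(I)N=g(I)$, matching what Theorem \ref{t3.1} requires.)

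The only genuinely non-formal steps are the choice of the two twists $\hat g(T)=g(TN)$ and $\tilde g(A)=N^{-1}g(A)N$; the first parallels the substitution used in the Jordan-product theorem, while the second, which conjugates $g$ by $N$, is the slightly less obvious move needed to convert the Lie-derived commutation $[AN^{-1},g(A)]=0$ into the symmetric form demanded by Theorem \ref{t3.1}. Once these twists are in place, every remaining verification is formal, so I do not expect any serious obstacle.
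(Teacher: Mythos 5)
Your proof is correct, and it reaches the paper's conclusion by a slightly different route. The paper does not invoke the preceding Lie-product lemma at all: it works with the factorization $N=(NA^{-1})A$ together with the sign trick $N=(-NA^{-1})(-A)$ to get $f=0$ and $[g(A),NA^{-1}]=M$ for invertible $A$, and then applies Theorem \ref{t3.1} to the pair $(g,\tilde g)$ with the twist $\tilde g(A)=N^{-1}g(A)N-N^{-1}MA$, carrying the constant $M$ along inside the second map. You instead substitute $B=A^{-1}N$, absorb $N$ into $\hat g(T)=g(TN)$, and quote the lemma, exactly parallel to how the Jordan-product theorem reuses its lemma; this forces $M=0$ and gives the commutation $[AN^{-1},g(A)]=0$ on all of $\mathcal{A}$, so your final twist $\tilde g(A)=N^{-1}g(A)N$ needs no $M$-correction term. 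Both arguments end with the same application of Theorem \ref{t3.1} (to $g$ and a conjugate of $g$ by $N$) and yield the same conclusion; your version is marginally sharper in that it makes $M=0$ explicit and produces the intermediate identity on all of $\mathcal{A}$ rather than only on invertible elements, while the paper's is self-contained apart from Theorem \ref{t3.1}. All the individual steps you list (additivity of the twists, surjectivity of $T\mapsto TN$, applicability of Theorem \ref{t3.1} to bimodule-valued maps) check out.
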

\begin{proof}
Since  $N=N A^{-1}A=(-N A^{-1})(-A)$, we have $f=0$ and $[g(A),N A^{-1}]=M$ for each invertible element $A$ in $\mathcal{A}$.
Multiplying $N^{-1}$ from the left side  of the above identity, we obtain $(N^{-1} g(A)N-N^{-1}MA)A^{-1}-A^{-1}g(A)=0,$ i.e.,
$$g(A)A-A(N^{-1} g(A)N-N^{-1}MA)=0.$$
By Theorem \ref{t3.1}, there is an additive commuting mapping $h$ such that $g(A)=h(A)+Ag(I)$ for each $A\in\mathcal{A}$.
\end{proof}

\section{Identities (\ref{3}) and (\ref{4})}

The main results of this section give a characterization of two additive mappings $f$, $g:\mathcal{A}\rightarrow \mathcal{M}$ satisfying the identity (\ref{4}).
We divide into three cases $n=1$, $n=2$ and $n>2$ to consider the identity (\ref{4}). Besides, every case involves the study of the identity (\ref{3}).

 The discussion begins with the  identity (\ref{3}) for $n=1$.
\begin{lem}\label{lem2.1}
 If two additive mappings $f$, $g:\mathcal{A}\rightarrow \mathcal{M}$ satisfy the identity
 $$f(A)+Ag(A^{-1})=0$$
 for each invertible element $A$ in $\mathcal{A}$, then $f=g=0$.
\end{lem}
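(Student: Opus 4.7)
The plan is to exploit additivity via a sign-change substitution to decouple $f$ and $g$ on the set of invertible elements, and then to use the fact (employed already in the proof of Theorem \ref{t3.1}) that every element of $\mathcal{A}$ is a sum of two invertibles to extend the conclusion to all of $\mathcal{A}$.

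For the first step, I would replace $A$ by $-A$ in the hypothesis. Since $f$ and $g$ are additive (hence odd) and $(-A)^{-1} = -A^{-1}$, the resulting identity reads $-f(A) + A g(A^{-1}) = 0$. Adding and subtracting this identity with the original identity $f(A) + A g(A^{-1}) = 0$ yields $2 f(A) = 0$ and $2 A g(A^{-1}) = 0$, and since $\mathcal{M}$ is a module over a complex Banach algebra I can divide by $2$. Multiplying $A g(A^{-1}) = 0$ on the left by $A^{-1}$ gives $g(A^{-1}) = 0$, and as $A$ ranges over the invertible elements, so does $A^{-1}$; hence both $f$ and $g$ vanish on the group of invertible elements of $\mathcal{A}$.

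For the second step, given an arbitrary $T \in \mathcal{A}$, I would pick a positive integer $n > \|T\|$ so that $nI + T$ is invertible by the Neumann series, write $T = (nI + T) + (-nI)$ as a sum of two invertible elements, and then invoke additivity of $f$ and $g$ together with the first step to conclude $f(T) = g(T) = 0$.

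The argument is rather short; the only delicate point is the odd-function substitution, which crucially uses additivity of both $f$ and $g$, and the only non-algebraic ingredient is the Banach-algebraic fact that $nI + T$ is invertible for $n$ sufficiently large. I do not anticipate any serious obstacle.
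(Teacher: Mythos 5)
Your proposal is correct and follows essentially the same route as the paper: the substitution $A\mapsto -A$ (using oddness of additive maps) to split the identity into $f(A)=0$ and $Ag(A^{-1})=0$ on invertibles, followed by writing an arbitrary $T$ via an invertible $nI+T$ with $n>\|T\|$ and using additivity to conclude $f=g=0$. The only cosmetic difference is that you phrase the extension step as a sum of two invertibles, while the paper uses $f(nI+T)=nf(I)+f(T)$ with $f(I)=0$; these are the same argument.
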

\begin{proof}
Suppose that $A$ is an invertible element in  $\mathcal{A}$.
Since $-A$ is also  invertible in  $\mathcal{A}$, by assumption, we have 
 $f(A)+Ag(A^{-1})=0$ and $-f(A)+Ag(A^{-1})=0$.
 Comparing these two equations, we obtain  $f(A)=0$ and $Ag(A^{-1})=0$.
 It follows that  $g(A)=0$ for each  invertible element  $A$ in  $\mathcal{A}$.
 In particular, we have $f(I)=g(I)=0$.

Let $T\in\mathcal{A}$, $n \in \mathbb{N}$ with $n> \|T\|+1$. 
Note that $nI+T$ is invertible in $\mathcal{A}$.
Then we arrive at $f(nI+T)=g(nI+T)=0$. 
The additivity of $f$ and $g$ implies $f(T)=g(T)=0$  for each element $T$ in $\mathcal{A}$.
\end{proof}

From the last paragraph of the  proof of Lemma \ref{lem2.1} , we can observe the following result.
\begin{rem}\label{rem1}
An additive mapping $f$ from a unital Banach  algebra  into its module satisfies that $f(A)=0$ for any invertible element $A$, then $f$ is identically equal to 0.
\end{rem}
As given in the theorem below, using  Lemma \ref{lem2.1}, additive mappings  satisfying the  identity (\ref{4}) for $n=1$ have a specific structure.

\begin{thm}\label{th2.1}
 Suppose that $N$ is an invertible element in $\mathcal{A}$ and $M$ is a fixed element in  $\mathcal{M}$.
 If two additive mappings $f$, $g:\mathcal{A}\rightarrow \mathcal{M}$ satisfy the identity
 $$f(A)+Ag(B)=M$$ 
 whenever $A,B\in\mathcal{A}$ with $AB=N$,   then $f=0$ and   $g(A)=AN^{-1}M$ for each $A\in\mathcal{A}$.
\end{thm}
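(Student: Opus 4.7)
The plan is to reduce the two-variable identity to a single-variable one involving an inverse, and then apply Lemma \ref{lem2.1} (together with the observation of Remark \ref{rem1}).

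First, for any invertible $A \in \mathcal{A}$, taking $B = A^{-1}N$ forces $AB = N$, so the hypothesis yields
\[
f(A) + A g(A^{-1}N) = M \qquad \text{for each invertible } A.
\]
I would introduce the auxiliary additive map $\tilde{g}: \mathcal{A} \to \mathcal{M}$ defined by $\tilde{g}(X) = g(XN)$; this is additive because $g$ is. The identity then rewrites as $f(A) + A\tilde{g}(A^{-1}) = M$ for each invertible $A$. Applying this to $A$ and to $-A$ (which is also invertible) and using additivity, I would add and subtract the two resulting equations to separate the two terms, concluding $f(A) = 0$ and $A\tilde{g}(A^{-1}) = M$ for every invertible $A \in \mathcal{A}$.

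Next, the density argument of Remark \ref{rem1} immediately upgrades $f = 0$ on invertibles to $f \equiv 0$ on all of $\mathcal{A}$. Rewriting the second relation by replacing $A$ with $A^{-1}$ gives $\tilde{g}(A) = AM$ for every invertible $A$. Then I would define the additive map $\phi(X) = \tilde{g}(X) - XM$ and observe that $\phi$ vanishes on every invertible element; another application of Remark \ref{rem1} forces $\phi \equiv 0$, i.e., $g(XN) = \tilde{g}(X) = XM$ for every $X \in \mathcal{A}$. Substituting $X = AN^{-1}$ for arbitrary $A \in \mathcal{A}$ yields $g(A) = AN^{-1}M$, which is the desired conclusion.

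I do not expect any real obstacle here: the key conceptual step is the change of variable $B = A^{-1}N$ together with the introduction of $\tilde{g}$, after which everything reduces mechanically to Lemma \ref{lem2.1} and Remark \ref{rem1}. The only point that requires a moment of care is checking that the $A \leftrightarrow -A$ trick cleanly separates $f$ from the $g$-term (so that no constant $M$ leaks into the wrong side), and that $\phi$ is genuinely additive — both of which are straightforward from the bimodule axioms and the additivity of $g$.
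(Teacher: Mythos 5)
Your proposal is correct and follows essentially the same route as the paper: the key step in both is the substitution $B=A^{-1}N$, which turns the hypothesis into the $n=1$ inverse identity for $f$ and the auxiliary additive map $X\mapsto g(XN)$, followed by the extension from invertible elements to all of $\mathcal{A}$ via Remark \ref{rem1}. The only cosmetic difference is that the paper absorbs the constant $M$ into the auxiliary map (writing $f(A)+A\bigl(g(A^{-1}N)-A^{-1}M\bigr)=0$) so that Lemma \ref{lem2.1} applies verbatim, whereas you keep $M$ on the right and rerun the $\pm A$ separation together with Remark \ref{rem1}, which is exactly the content of that lemma's proof.
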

\begin{proof}
For each invertible element $A\in\mathcal{A}$, it follows from $AA^{-1}N=N$ that $f(A)+Ag(A^{-1}N)=M$, which means that  
$$f(A)+A(g(A^{-1}N)-A^{-1}M)=0.$$
By Lemma \ref{lem2.1}, $f=0$ and $g(AN)-AM=0$ for each $A\in\mathcal{A}$.
Replacing $A$ by $AN^{-1}$  gives $$g(A)=AN^{-1}M$$ for each $A\in\mathcal{A}$.
\end{proof}

Different with the proof of Lemma \ref{lem2.1},  our strategy for dealing with the identity (\ref{3})  for  $n = 2$ is to  transform this identity involving two mappings into two equations  in which there is only one unknown mapping in each equation.
A characterization of an additive mapping satisfying the identity (\ref{3}) for  $n = 2$ is actually equivalent to considering two related mapping $f_{1}$ and $f_{2}$ such that $f_{1}(A)+A^{2}f_{1}(A^{-1})=0$ and $f_{2}(A)-A^{2}f_{2}(A^{-1})=0$. 
In the following, we introduce this transformation method, but before that,  the structure of the mappings $f_{1}$ and $f_{2}$ is given.

Recall that an additive mapping $f$ from a ring $\mathcal{R}$ into an $\mathcal{R}$-left module is a \emph{Jordan left derivation} if  $f(A^2)=2f(A)$ holds for each $A\in \mathcal{R}$.
The equation $f(A)+A^{2}f(A^{-1})=0$ is Vukman's identity introduced in Section 1. 
In the following, we consider Vukman's identity with the mapping $f$ from $\mathcal{A}$ into  $\mathcal{M}$.

\begin{lem}\label{lem2.2}
 If an additive mapping $f:\mathcal{A}\rightarrow \mathcal{M}$ satisfies the identity
 $f(A)+A^{2}f(A^{-1})=0$
 for each invertible element $A$ in $\mathcal{A}$, then  $f$ is a Jordan left derivation.
\end{lem}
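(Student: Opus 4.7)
The plan is: (i) show $f(I) = 0$; (ii) prove $f(A^2) = 2Af(A)$ for every invertible $A$ by a symmetrization trick; (iii) extend to arbitrary $T \in \mathcal{A}$ by the Banach-algebra technique from Lemma~\ref{lem2.1}. For (i), setting $A = I$ in the hypothesis gives $2f(I) = 0$, hence $f(I) = 0$ since $\mathcal{M}$ is a complex module; as a consequence $f(B^{-1}) = -B^{-2}f(B)$ for every invertible $B$.

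For (ii), fix an invertible $A$ and pick a nonzero rational $\lambda$ with $\lambda^2$ outside the compact spectrum of $A^2$, so that $A + \lambda I$, $A - \lambda I$, and $A^2 - \lambda^2 I$ are all invertible. Applying the hypothesis to $A + \lambda I$ and using $f(\lambda I) = \lambda f(I) = 0$ (since $f$ is $\mathbb{Q}$-linear), we get $f((A + \lambda I)^{-1}) = -(A+\lambda I)^{-2} f(A)$. The resolvent identity $A^{-1} - (A+\lambda I)^{-1} = \lambda A^{-1}(A+\lambda I)^{-1}$ together with additivity then yields the key formula
\begin{equation*}
f\bigl(A^{-1}(A+\lambda I)^{-1}\bigr) = \frac{1}{\lambda}\bigl[(A+\lambda I)^{-2} - A^{-2}\bigr] f(A).
\end{equation*}

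The decisive step is to apply this formula at both $\lambda$ and $-\lambda$ and add. On the left, the sum of the arguments simplifies via $A^{-1}\bigl[(A+\lambda I)^{-1} + (A-\lambda I)^{-1}\bigr] = 2(A^2 - \lambda^2 I)^{-1}$, so the left side collapses to $2f((A^2-\lambda^2 I)^{-1})$. On the right, using $(A+\lambda I)^{-2} - (A-\lambda I)^{-2} = -4\lambda A(A^2-\lambda^2 I)^{-2}$, one obtains
\begin{equation*}
f\bigl((A^2 - \lambda^2 I)^{-1}\bigr) = -2A(A^2 - \lambda^2 I)^{-2} f(A).
\end{equation*}
Applying the hypothesis directly to $A^2 - \lambda^2 I$ and using $f(\lambda^2 I) = 0$ gives $f((A^2 - \lambda^2 I)^{-1}) = -(A^2 - \lambda^2 I)^{-2} f(A^2)$; equating the two expressions and multiplying by $(A^2 - \lambda^2 I)^2$, which commutes with $A$, delivers $f(A^2) = 2Af(A)$. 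For (iii), taking $A = nI + T$ with $n > \|T\|$ and expanding $f(A^2) = 2Af(A)$ using $f(I) = 0$, the $n$-dependent terms cancel, leaving $f(T^2) = 2Tf(T)$ for all $T$.

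The main obstacle is that the hypothesis is invariant under $A \leftrightarrow A^{-1}$, so substitutions like $A \to A^{-1}$, $A \to \alpha A$ with $\alpha \in \mathbb{Q}$, or $A \to A + I$ alone merely reproduce it. Only the combination of a shift by $\lambda I$ with a nonzero rational $\lambda$, which is compatible with the $\mathbb{Q}$-linearity of $f$, and a $\lambda \leftrightarrow -\lambda$ symmetrization, which exploits the factorization $A^2 - \lambda^2 I = (A + \lambda I)(A - \lambda I)$, breaks this symmetry enough to isolate $f(A^2)$ on one side and $f(A)$ on the other.
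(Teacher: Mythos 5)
Your proof is correct: step (i) is immediate, the resolvent computation and the $\lambda\leftrightarrow-\lambda$ symmetrization in step (ii) are verified by direct calculation (all the operators involved commute, so the algebra goes through), and step (iii) extends $f(A^2)=2Af(A)$ from invertible $A$ to all of $\mathcal{A}$ exactly as in Lemma~\ref{lem2.1}. Note that what you prove, $f(A^2)=2Af(A)$, is the standard definition of a Jordan left derivation (the displayed definition in the paper has an evident typo). The paper itself omits the proof and defers to Vukman's Theorem~5 in \cite{V}; the classical argument there is shorter than yours and does not need the symmetrization you describe as ``decisive.'' Indeed, your closing claim that a shift by $I$ alone ``merely reproduces'' the hypothesis is not quite right: from $A^{-1}-(A+I)^{-1}=A^{-1}(A+I)^{-1}=(A^2+A)^{-1}$ one gets $f((A^2+A)^{-1})=\bigl[(A+I)^{-2}-A^{-2}\bigr]f(A)$, while applying the hypothesis to $A^2+A$ gives $f((A^2+A)^{-1})=-(A^2+A)^{-2}\bigl[f(A^2)+f(A)\bigr]$; the cross term $f(A^2+A)=f(A^2)+f(A)$ is absorbed by additivity, and multiplying by $(A^2+A)^2=A^2(A+I)^2$ yields $f(A^2)+f(A)=(2A+I)f(A)$, i.e.\ $f(A^2)=2Af(A)$ whenever $A$ and $A+I$ are invertible --- which is all that step (iii) requires. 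So your route is a correct, self-contained, and somewhat more robust variant (it handles every invertible $A$ directly by choosing $\lambda$ off the spectrum), at the cost of an extra symmetrization that the standard proof avoids.
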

The proof of Lemma \ref{lem2.2} can refer to  proof of Theorem 5 in \cite{V}. So we omit the proof.

Next we characterize the equation $f_{2}(A)-A^{2}f_{2}(A^{-1})=0$ with  a concise  proof. 
\begin{lem}\label{lem2.3}
 If an additive mapping $f:\mathcal{A}\rightarrow \mathcal{M}$ satisfies the identity
 $f(A)-A^{2}f(A^{-1})=0$
 for each invertible element $A$ in $\mathcal{A}$, then  $f(A)=Af(I)$ for each $A\in \mathcal{A}$.
\end{lem}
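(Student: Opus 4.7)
The plan is to rewrite the hypothesis as $f(A)=A^{2}f(A^{-1})$ for each invertible $A\in\mathcal{A}$, establish the conclusion $f(A)=Af(I)$ first for those $A$ such that $A$ and $I+A$ are both invertible, and then spread this to every element of $\mathcal{A}$ by the familiar shift $A\leadsto nI+A$ with $n>\|A\|$, exactly in the spirit of the last paragraph of the proof of Lemma \ref{lem2.1}.

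For the core step I would fix $A$ invertible with $I+A$ invertible. Observe that $I+A^{-1}=A^{-1}(I+A)=(I+A)A^{-1}$ is a product of commuting invertible elements, hence invertible, and its inverse equals $(I+A)^{-1}A=(I+A)^{-1}\bigl((I+A)-I\bigr)=I-(I+A)^{-1}$. Applying the hypothesis to $I+A$ and using additivity of $f$ gives
\begin{equation*}
f(A)+f(I)=(I+A)^{2}f\bigl((I+A)^{-1}\bigr).
\end{equation*}
Applying it to $I+A^{-1}$, using the identification of its inverse above together with $(I+A^{-1})^{2}=A^{-2}(I+A)^{2}$ and additivity of $f$, yields
\begin{equation*}
f(A^{-1})+f(I)=A^{-2}(I+A)^{2}\bigl(f(I)-f((I+A)^{-1})\bigr).
\end{equation*}
Multiplying the latter by $A^{2}$ on the left, substituting $A^{2}f(A^{-1})=f(A)$ and $(I+A)^{2}f((I+A)^{-1})=f(A)+f(I)$, and using $(I+A)^{2}-A^{2}-I=2A$, all the $f(I)$-terms on the right collapse and I am left with $2f(A)=2Af(I)$, i.e., $f(A)=Af(I)$.

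To dispose of the auxiliary hypothesis that $I+A$ be invertible, I would take any $T\in\mathcal{A}$ and choose $n\in\mathbb{N}$ with $n>\|T\|$; then both $nI+T$ and $(n+1)I+T=I+(nI+T)$ lie in the open unit ball translate $n(I+n^{-1}T)$ and are invertible. Applying the previous step to $nI+T$ gives $f(nI+T)=(nI+T)f(I)$, and additivity of $f$ together with $f(nI)=nf(I)$ reduces this to $f(T)=Tf(I)$. The main obstacle I expect is spotting the algebraic identity $(I+A^{-1})^{-1}=I-(I+A)^{-1}$; once that is in hand, it is precisely what converts $f\bigl((I+A^{-1})^{-1}\bigr)$ into $f(I)-f((I+A)^{-1})$ and lets the two auxiliary unknowns $f((I+A)^{-1})$ and $f(A^{-1})$ be eliminated in a single pass.
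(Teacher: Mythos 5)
Your proof is correct and is essentially the paper's argument in a slightly different arrangement: where the paper first normalizes to $f(I)=0$ and chains the identity through $B$, $B^{-1}-I$ and $I-B$ to obtain $f(B)=-f(B)$, you keep $f(I)$ and eliminate the unknowns $f(A^{-1})$ and $f((I+A)^{-1})$ directly to reach $2f(A)=2Af(I)$, and both proofs conclude with the same shift $T\mapsto nI+T$ using invertibility in the Banach algebra. The only point worth making explicit is that cancelling the factor $2$ is legitimate because $\mathcal{M}$ is a unital bimodule over the complex unital algebra $\mathcal{A}$ (act by $\frac{1}{2}I$), a harmless step the paper itself uses implicitly elsewhere.
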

\begin{proof}
Assume that $f(I)=0$.
Let $T\in \mathcal{A}$,  $n$ be a positive number with $n>\|T\|+1$, and $B=nI+T$. Then $B$ and $I-B$ are both invertible in $\mathcal{A}$.
By assumption,
\begin{align*}
f(B)&=B^{2}f(B^{-1})=B^{2}f(B^{-1}(I-B))\\
&=B^{2}B^{-2}(I-B)^{2}f(B(I-B)^{-1})\\
&=(I-B)^{2}f(I-B)^{-1}\\
&=(I-B)^{-2}(I-B)^{2}f(I-B)\\
&=-f(B).
\end{align*}
It means that $f(B)=0$.
It follows from $B=nI+T$ and $f(I)=0$  that  $f(T)=0=Tf(I)$ holds for each $T\in \mathcal{A}$.

For the general case of $f(I)\neq0$, define an additive mapping $\tilde{f}$ from $\mathcal{A}$ to $\mathcal{M}$ by
$$\tilde{f}(T)=f(T)-Tf(I)$$
for each $T$ in $\mathcal{A}$.
Then we have  $\tilde{f}(A)-A^{2}\tilde{f}(A^{-1})=f(A)-A^{2}f(A^{-1})=0$ for each invertible element $A$ in $\mathcal{A}$ and $\tilde{f}(I)=0$.
From the previous discussion of this proof,  we have  $\tilde{f}(T)=0$ for each $T\in \mathcal{A}$.
We arrive at $f(T)=Tf(I)$ for each $T\in \mathcal{A}$.
The proof  is complete.
\end{proof}

Based on Lemmas \ref{lem2.2} and \ref{lem2.3}, we consider the identity (\ref{3}) for  $n = 2$ in the following conclusion, which is proved by transforming this identity into  equations in the above lemmas.

\begin{lem}\label{lem2.4}
If two additive mappings $f$, $g:\mathcal{A}\rightarrow \mathcal{M}$ satisfy the identity
 $$f(A)+A^{2}g(A^{-1})=0$$
 for each invertible element $A$ in $\mathcal{A}$, 
 then there exists a  Jordan left derivation $d$ such that $f(A)=d(A)+Af(I)$ and $g(A)=d(A)+Ag(I)$ for each $A\in \mathcal{A}$, where $f(I)=-g(I)$. 
\end{lem}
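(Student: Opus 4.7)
The plan is to separate the two unknown mappings by symmetrizing the identity. First I would plug $A=I$ to extract $f(I)+g(I)=0$. Then, since $A^{-1}$ is invertible whenever $A$ is, substituting $A^{-1}$ in place of $A$ in the hypothesis $f(A)+A^{2}g(A^{-1})=0$ yields the companion relation
\begin{equation*}
g(A)+A^{2}f(A^{-1})=0
\end{equation*}
for every invertible $A\in\mathcal{A}$. Now I have two independent identities relating $f$ and $g$, which is exactly what is needed to decouple them.

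Next, I would introduce the additive mappings
\begin{equation*}
f_{1}=\tfrac{1}{2}(f+g),\qquad f_{2}=\tfrac{1}{2}(f-g),
\end{equation*}
which are well defined since $\mathcal{M}$ is a module over the $\mathbb{C}$-algebra $\mathcal{A}$ (so division by $2$ is harmless). Adding the two identities gives
\begin{equation*}
f_{1}(A)+A^{2}f_{1}(A^{-1})=0,
\end{equation*}
while subtracting gives
\begin{equation*}
f_{2}(A)-A^{2}f_{2}(A^{-1})=0,
\end{equation*}
each for every invertible $A\in\mathcal{A}$. These are precisely the hypotheses of Lemmas \ref{lem2.2} and \ref{lem2.3} respectively.

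Applying Lemma \ref{lem2.2} shows that $f_{1}$ is a Jordan left derivation; applying Lemma \ref{lem2.3} shows that $f_{2}(A)=A f_{2}(I)$ for every $A\in\mathcal{A}$. Since $g(I)=-f(I)$, we have $f_{2}(I)=\tfrac{1}{2}(f(I)-g(I))=f(I)$, so $f_{2}(A)=Af(I)$. Setting $d:=f_{1}$ and reassembling $f=f_{1}+f_{2}$ and $g=f_{1}-f_{2}$ yields
\begin{equation*}
f(A)=d(A)+Af(I),\qquad g(A)=d(A)-Af(I)=d(A)+Ag(I),
\end{equation*}
which is the desired conclusion.

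The proof is essentially a decoupling trick; there is no serious technical obstacle once one notices that replacing $A$ by $A^{-1}$ gives a second identity that, when combined with the first, splits cleanly into the Vukman-type equation handled by Lemma \ref{lem2.2} and the ``anti-Vukman'' equation handled by Lemma \ref{lem2.3}. The only mildly delicate point is that the decomposition $f_{1}=\tfrac{1}{2}(f+g)$ requires the $\tfrac{1}{2}$, which is available here because $\mathcal{M}$ is a $\mathbb{C}$-vector space.
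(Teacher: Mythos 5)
Your proposal is correct and follows essentially the same route as the paper: substitute $A^{-1}$ for $A$ (and left-multiply by $A^{2}$) to get the companion identity, then add and subtract to reduce to Lemmas \ref{lem2.2} and \ref{lem2.3} applied to $f+g$ and $f-g$, and reassemble. The only cosmetic difference is that you absorb the factor $\tfrac{1}{2}$ into $f_{1},f_{2}$ at the outset, whereas the paper introduces it at the end by rescaling the Jordan left derivation.
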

\begin{proof}
By assumption, we have 
\begin{equation}\label{2.1}
f(A)+A^{2}g(A^{-1})=0 
\end{equation} for each invertible element $A$ in $\mathcal{A}$.
Taking  $A=I$ in the  equation (\ref{2.1}), we obtain 
we have $f(I)+g(I)=0$. 
Replacing  $A$ with $A^{-1}$ in the equation (\ref{2.1}), we obtain $f(A^{-1})+A^{-2}g(A)=0$, which leads to 
\begin{equation}\label{2.2}
A^{2}f(A^{-1})+g(A)=0.
\end{equation}
Adding the equations (\ref{2.1}) and (\ref{2.2}) gives 
$$(f+g)(A)+A^{2}(f+g)(A^{-1})=0.$$
In view of Lemma \ref{lem2.2}, there exists a  Jordan left derivation $d$ such that $f+g=d$.
Subtracting the equation (\ref{2.2}) from the equation (\ref{2.1}), we have 
 $$(f-g)(A)-A^{2}(f-g)(A^{-1})=0.$$
It follows from  Lemma \ref{lem2.3} that $(f-g)(T)=T(f-g)(I)$ for each $T\in \mathcal{A}$.
Therefore, we conclude that
$$f(T)=\frac{1}{2}(f+g+f-g)=\frac{1}{2}(d(T)+T(f-g)(I))=\frac{1}{2}d(T)+Tf(I)$$ and 
$$g(T)=\frac{1}{2}[f+g-(f-g)]=\frac{1}{2}(d(T)-T(f-g)(I))=\frac{1}{2}d(T)+Tg(I)$$ for each $T\in \mathcal{A}$.
Taking $d'=\frac{1}{2}d$, the proof is complete.
\end{proof}

For  the identity (\ref{3}) with $n = 2$,  Bre\v{s}ar and Vukman gave an adequate description of $f$ and  $g$ on a class of algebras in \cite{BV}. 
Precisely, in case $B(\mathcal{X})$ is the algebra of all bounded linear operators on a Banach space $\mathcal{X}$, if additive mappings  $f$,  $g$ which map $B(\mathcal{X})$ into  $\mathcal{X}$ or into $B(\mathcal{X})$ satisfy $f(A)-A^{2}g(A^{-1})=0$ for each invertible operator $A\in B(\mathcal{X})$, then $f$ and $g$ are of the form $f(A)=-g(A)=Af(I)$ for each $A\in B(\mathcal{X})$.
Comparing the result in \cite{BV} with Lemma \ref{lem2.4},  the forms of the mappings $f$ and  $g$  differ only in a Jordan left derivation.
This difference arises from the fact that every Jordan left derivation from $B(\mathcal{X})$ into  $\mathcal{X}$ or into $B(\mathcal{X})$ is equal to zero (see \cite[Corollary 1.5]{BV}).

The question of on which algebras  every Jordan left derivation is identical to zero has attracted the attention of several authors. 
Vukman gave a positive answer for  semisimple Banach algebras,  which states that a linear Jordan left derivation on a semisimple Banach algebra is zero (see \cite[Theorem 4]{V}). 
It was proved that every linear Jordan left derivation from a  $C^*$-algebra into  its Banach left module is zero (see \cite[Theorem 2.5]{ADL}).
One can obtain directly the following corollaries  along Lemma \ref{lem2.4} and the results in \cite{ADL,V}.

\begin{cor}\label{cor2.1}
Let $\mathcal{B}$ be  a unital   semisimple Banach algebra. 
  If two linear  mappings $f$, $g:\mathcal{B}\rightarrow \mathcal{B}$ satisfy the identity
 $f(A)+A^{2}g(A^{-1})=0$
 for each invertible element $A$ in $\mathcal{B}$, 
 then  $f(A)=-g(A)=Af(I)$  for each $A\in \mathcal{B}$. 
\end{cor}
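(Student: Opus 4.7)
The plan is simply to combine the structural description already established in Lemma~\ref{lem2.4} with Vukman's vanishing theorem \cite[Theorem~4]{V}, which states that every linear Jordan left derivation on a unital semisimple Banach algebra is identically zero. The corollary is flagged in the text as a direct consequence of these two inputs, so the task is to assemble them cleanly.

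First I would apply Lemma~\ref{lem2.4} to the (linear, hence additive) mappings $f$ and $g$. This produces a Jordan left derivation $d \colon \mathcal{B} \to \mathcal{B}$ such that
\[
f(A) = d(A) + A f(I), \qquad g(A) = d(A) + A g(I)
\]
for every $A \in \mathcal{B}$, together with the relation $f(I) = -g(I)$.

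Next I would verify that $d$ is in fact linear, not merely additive. Since $d(A) = f(A) - A f(I)$ and both $f$ and the right-multiplication map $A \mapsto A f(I)$ are linear on $\mathcal{B}$, so is $d$. Consequently $d$ is a linear Jordan left derivation on a unital semisimple Banach algebra, and Vukman's theorem forces $d \equiv 0$. Substituting $d = 0$ together with $g(I) = -f(I)$ into the two expressions above yields $f(A) = A f(I)$ and $g(A) = -A f(I)$, that is, $f(A) = -g(A) = A f(I)$ for every $A \in \mathcal{B}$.

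There is essentially no real obstacle: the hard analytic work is packaged inside Lemma~\ref{lem2.4} (which reduces the problem to a Jordan left derivation) and inside Vukman's theorem (which kills that derivation on semisimple Banach algebras). The only point that must not be skipped is the linearity check on $d$, because Lemma~\ref{lem2.4} as stated produces $d$ only as an additive mapping; fortunately this check is immediate from the linearity of $f$.
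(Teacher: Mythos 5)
Your proposal is correct and is exactly the argument the paper intends: it presents the corollary as a direct consequence of Lemma~\ref{lem2.4} together with Vukman's theorem that linear Jordan left derivations on semisimple Banach algebras vanish, which is what you assemble. Your explicit check that $d(A)=f(A)-Af(I)$ is linear (so that Vukman's theorem applies) is a worthwhile detail the paper leaves implicit.
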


\begin{cor}\label{cor2.2}
Suppose that  two  linear mappings $f,g$ from a unital  $C^*$-algebra $\mathcal{B}$ into its unital Banach bimodule satisfy 
 $f(A)+A^{2}g(A^{-1})=0$
 for each invertible element $A$ in $\mathcal{B}$. 
 Then  $f(A)=-g(A)=Af(I)$  for each $A\in \mathcal{B}$. 
\end{cor}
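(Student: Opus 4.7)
The plan is to derive this corollary as a direct combination of Lemma \ref{lem2.4} with the $C^*$-algebra Jordan left derivation result of \cite{ADL}. I would first invoke Lemma \ref{lem2.4} to obtain a (necessarily additive) Jordan left derivation $d:\mathcal{B}\to\mathcal{M}$ together with the normalization $f(I)=-g(I)$, such that
\begin{equation*}
f(A)=d(A)+Af(I)\quad\text{and}\quad g(A)=d(A)+Ag(I)
\end{equation*}
for each $A\in\mathcal{B}$.

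The next step is to upgrade $d$ from additive to linear so that the results of \cite{ADL} apply. Since $f$ is linear by hypothesis and the map $A\mapsto Af(I)$ is linear, the formula $d(A)=f(A)-Af(I)$ displays $d$ as the difference of two linear mappings; hence $d$ is a linear Jordan left derivation from the unital $C^*$-algebra $\mathcal{B}$ into its unital Banach bimodule $\mathcal{M}$ (which is in particular a Banach left $\mathcal{B}$-module).

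I would then apply \cite[Theorem 2.5]{ADL} to conclude that $d\equiv 0$. Substituting this back yields $f(A)=Af(I)$ and, using $g(I)=-f(I)$, also $g(A)=Ag(I)=-Af(I)$, which gives exactly $f(A)=-g(A)=Af(I)$ for each $A\in\mathcal{B}$.

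There is essentially no obstacle: once Lemma \ref{lem2.4} is in hand, the corollary is a one-line consequence of the vanishing theorem for linear Jordan left derivations on $C^*$-algebras. The only mild subtlety to be careful about is the verification that the abstractly produced $d$ inherits linearity from the linearity of $f$, which is what allows the cited theorem (stated for linear Jordan left derivations) to be invoked.
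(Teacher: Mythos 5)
Your proposal is correct and follows exactly the route the paper intends: combine Lemma \ref{lem2.4} with the vanishing theorem for linear Jordan left derivations on $C^*$-algebras from \cite{ADL}, noting that $d(A)=f(A)-Af(I)$ is linear because $f$ is. This is precisely how the paper derives Corollary \ref{cor2.2}, so no further comment is needed.
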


With the same proof as that of Theorem \ref{th2.1}, the following result is a characterization of  the  identity (\ref{4})  for the case $n = 2$. 
\begin{thm}\label{th2.2.1}
 Suppose that $N$ is an invertible element in $\mathcal{A}$, and $M$ is a fixed element in  $\mathcal{M}$.
 If two additive mappings $f$, $g:\mathcal{A}\rightarrow \mathcal{M}$ satisfy the identity
 $$f(A)+A^{2}g(B)=M \quad \text{whenever}~A,B\in\mathcal{A}~\text{with}~AB=N,$$
 then there exists a  Jordan left derivation $d$ such that $f(A)=d(A)+Af(I)$ and $g(A)=d(AN^{-1})+AN^{-1}g(I)$ for each $A\in \mathcal{A}$. 
\end{thm}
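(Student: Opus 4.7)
The plan is to follow the strategy of Theorem \ref{th2.1}: specialise the pair $(A,B)$ to $(A, A^{-1}N)$ for invertible $A$ so that only one argument varies, and then reduce the resulting identity to a previously established lemma---here Lemma \ref{lem2.4} in place of Lemma \ref{lem2.1}.

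The first step is to show that $M=0$. Applying the hypothesis both at $(A, A^{-1}N)$ and at $(-A, -A^{-1}N)$ and invoking the additivity of $f$ and $g$ yields
\begin{equation*}
f(A) + A^{2}g(A^{-1}N) = M \quad \text{and} \quad -f(A) - A^{2}g(A^{-1}N) = M,
\end{equation*}
whence $2M=0$ and therefore $M=0$. This step has no counterpart in Theorem \ref{th2.1}, where the odd exponent of $A$ prevents such a cancellation; in fact it is precisely the obstruction to copying that earlier proof verbatim, because the natural shift $g(A^{-1}N) - A^{-2}M$ that would parallel $g(A^{-1}N) - A^{-1}M$ fails to be additive in $A^{-1}$ (the cross terms $(XY+YX)M$ do not vanish). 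So one must first extract $M=0$ before any rearrangement can go through cleanly.

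Next, introduce the additive mapping $G:\mathcal{A}\to\mathcal{M}$ defined by $G(T)=g(TN)$. After the first step the identity becomes
\begin{equation*}
f(A) + A^{2}G(A^{-1}) = 0 \quad \text{for each invertible } A\in\mathcal{A},
\end{equation*}
which is precisely the hypothesis of Lemma \ref{lem2.4} applied to the pair $(f,G)$. That lemma supplies a Jordan left derivation $d$ together with the identities $f(A) = d(A) + Af(I)$ and $G(A) = d(A) + AG(I)$ for every $A\in\mathcal{A}$ (and the side relation $f(I)+G(I)=0$, i.e.\ $g(N)=-f(I)$).

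Finally, translating back via $g(A) = G(AN^{-1})$ produces
\begin{equation*}
g(A) = d(AN^{-1}) + AN^{-1}G(I),
\end{equation*}
which is the claimed form (with $G(I)=g(N)$). The only real obstacle is the first step: once the $\pm A$ substitution is used to force $M=0$, the change of variable $G(T)=g(TN)$ is a routine manoeuvre that reduces everything to Lemma \ref{lem2.4}.
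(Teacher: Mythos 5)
Your argument is correct and is essentially the route the paper intends: the paper gives no separate proof for this theorem, merely asserting ``the same proof as Theorem \ref{th2.1}'', and your two observations are exactly what is needed to make that reference honest. Specialising to $(A,A^{-1}N)$, noting that the $n=1$ trick of absorbing $M$ into the second map fails here because $T\mapsto g(TN)-T^{2}M$ is not additive, extracting $M=0$ from the pair $(\pm A,\pm A^{-1}N)$ (legitimate, since $\mathcal{M}$ is a complex vector space, hence $2$-torsion free), and then applying Lemma \ref{lem2.4} to $(f,G)$ with $G(T)=g(TN)$ is the right repair, and every step checks out.

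The one point you should not wave away is your closing parenthesis ``which is the claimed form (with $G(I)=g(N)$)''. What you actually prove is $g(A)=d(AN^{-1})+AN^{-1}g(N)$ together with $g(N)=-f(I)$, whereas the theorem as printed has $g(I)$ in place of $g(N)$. These agree only when $g(N)=g(I)$, equivalently $Nd(N^{-1})=0$ (e.g.\ when $N=I$), and not in general: take $\mathcal{A}=\mathcal{M}=\mathbb{C}$, $N=2$, $M=0$, $f(A)=-2A$, $g(A)=A$. Then $f(A)+A^{2}g(B)=-2A+A(AB)=0$ whenever $AB=2$, the relation $f(A)=d(A)+Af(I)$ forces $d=0$, and $d(AN^{-1})+AN^{-1}g(I)=A/2\neq A=g(A)$, while $d(AN^{-1})+AN^{-1}g(N)=A$ is correct. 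So the discrepancy is real; it lies in the printed statement rather than in your derivation, and the conclusion should read $g(A)=d(AN^{-1})+AN^{-1}g(N)=d(AN^{-1})-AN^{-1}f(I)$. State that correction explicitly instead of asserting that the claimed form has been reached.
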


If we multiply $A^{-1}$ from the left side  of the identity (\ref{3})  for  $n = 2$, one can observe the equivalent expression of the identity (\ref{3}), namely the equation $A^{-1}f(A)+Ag(A^{-1})=0$.
We can  extend the above equation to the identity  $Bf(A)+Ag(B)=M$ whenever $A,B\in\mathcal{A}$ with $ AB=N$, which is  characterized in the following. 
\begin{thm}\label{th2.2.2}
 Suppose that $N$ is an invertible element in $\mathcal{A}$, and $M$ is a fixed element in  $\mathcal{M}$.
 If two additive mappings $f$, $g:\mathcal{A}\rightarrow \mathcal{M}$ satisfy the identity
 $$Bf(A)+Ag(B)=M\quad \text{whenever}~A,B\in\mathcal{A}~\text{with}~AB=N,$$
 then there exists a  Jordan left derivation $d$ such that $f(A)=N^{-1}d(A)+N^{-1}ANf(I)$ and $g(A)=d(AN^{-1})+AN^{-1}g(N)$ for each $A\in \mathcal{A}$. 
\end{thm}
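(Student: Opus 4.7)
The plan is to reduce the identity $Bf(A)+Ag(B)=M$ with $AB=N$ to the simpler identity handled by Lemma \ref{lem2.4}, by eliminating the dependence on the second variable in the same spirit as the proof of Theorem \ref{th2.2.1}. First, for any invertible $A\in\mathcal{A}$ the pair $(A,A^{-1}N)$ is admissible, so substituting gives
$$A^{-1}Nf(A)+Ag(A^{-1}N)=M.$$
Multiplying on the left by $A$ yields the cleaner relation
$$Nf(A)+A^{2}g(A^{-1}N)=AM \qquad \text{for each invertible } A\in\mathcal{A}.$$

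Next I would introduce two auxiliary additive mappings from $\mathcal{A}$ into $\mathcal{M}$, namely
$$f_{1}(T)=Nf(T)-TM \quad \text{and}\quad \tilde{g}(T)=g(TN),$$
so that the displayed equation becomes
$$f_{1}(A)+A^{2}\tilde{g}(A^{-1})=0 \qquad \text{for each invertible } A\in\mathcal{A}.$$
A short computation using the admissible pair $(I,N)$ shows $M=Nf(I)+g(N)$, hence $f_{1}(I)=Nf(I)-M=-g(N)=-\tilde{g}(I)$, which matches the compatibility relation appearing in Lemma \ref{lem2.4}.

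Now Lemma \ref{lem2.4} applies directly to $f_{1}$ and $\tilde{g}$: there exists a Jordan left derivation $d:\mathcal{A}\to\mathcal{M}$ with
$$f_{1}(A)=d(A)+Af_{1}(I) \quad \text{and}\quad \tilde{g}(A)=d(A)+A\tilde{g}(I).$$
Unwinding the definition of $f_{1}$, the first formula reads $Nf(A)-AM=d(A)+A(Nf(I)-M)$, which simplifies to $Nf(A)=d(A)+ANf(I)$, giving $f(A)=N^{-1}d(A)+N^{-1}ANf(I)$ as claimed. Similarly, $\tilde{g}(A)=g(AN)=d(A)+Ag(N)$; replacing $A$ by $AN^{-1}$ yields $g(A)=d(AN^{-1})+AN^{-1}g(N)$.

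The argument is essentially routine once the correct auxiliary mappings are chosen; the only step that requires care is the bookkeeping that ensures $f_{1}$ and $\tilde{g}$ satisfy the exact hypotheses of Lemma \ref{lem2.4} (in particular the normalization $f_{1}(I)+\tilde{g}(I)=0$), after which the structural conclusions transfer back to $f$ and $g$ without difficulty.
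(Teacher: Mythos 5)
Your proposal is correct and follows essentially the same route as the paper: substitute the pair $(A,A^{-1}N)$, pass to the auxiliary additive maps $T\mapsto Nf(T)-TM$ and $T\mapsto g(TN)$, and apply Lemma \ref{lem2.4} before unwinding. The only cosmetic difference is your explicit check that $f_{1}(I)+\tilde{g}(I)=0$, which is not needed since that relation is part of the conclusion of Lemma \ref{lem2.4} rather than a hypothesis.
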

\begin{proof}
For each invertible element $A$ in $\mathcal{A}$, it follows from  $A(A^{-1}N)=N$ that  $A^{-1}Nf(A)+Ag(A^{-1}N)=M$, i.e.,
 $$A^{-1}(Nf(A)-AM)+Ag(A^{-1}N)=0.$$  
According to Lemma \ref{lem2.4}, there exists a  Jordan left derivation $d$ such that 
\begin{equation}\label{2.3}
  Nf(T)-TM=d(T)+T(Nf(I)-M)
\end{equation}
and 
\begin{equation}\label{2.4}
g(TN)=d(T)+Tg(N)
\end{equation}
 for each $T\in \mathcal{A}$.
Multiplying $N^{-1}$ from the left side  of the equation (\ref{2.3}), we arrive at 
$$f(T)=N^{-1}d(T)+N^{-1}TNf(I)$$ for each $T\in \mathcal{A}$.
Substituting $TN^{-1}$ for $T$ in the equation(\ref{2.4}) implies 
$g(T)=d(TN^{-1})+TN^{-1}g(N)$ for each $T\in \mathcal{A}$.
\end{proof}

Now we consider the identity (\ref{3}) for the integer $n>2$.

\begin{lem}\label{lem2.5}
 Let $n >2$ be an integer.
If two additive mappings $f$, $g:\mathcal{A}\rightarrow \mathcal{M}$ satisfy the identity
 $$f(A)+A^{n}g(A^{-1})=0$$
 for each invertible element $A$ in $\mathcal{A}$, then $f=g=0.$
\end{lem}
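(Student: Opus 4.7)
The plan is to exploit additivity via a dilation $A \mapsto 2A$, in the spirit of (though structurally distinct from) the sign-flip $A \mapsto -A$ that eliminated $f$ in the $n=1$ case of Lemma~\ref{lem2.1}. The idea is that inserting $2A$ into the hypothesis produces the coefficients $2$ and $2^{n-1}$ on the two terms, and for $n > 2$ their mismatch lets us isolate the term involving $g(A^{-1})$.

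Concretely, for each invertible $A \in \mathcal{A}$, the element $2A$ is also invertible, so the hypothesis gives
\[
f(2A) + (2A)^{n} g\bigl((2A)^{-1}\bigr) = 0.
\]
Additivity of $f$ yields $f(2A) = 2 f(A)$, while additivity of $g$ (via $g(A^{-1}) = 2\, g(\frac{1}{2} A^{-1})$) gives $g\bigl((2A)^{-1}\bigr) = \frac{1}{2} g(A^{-1})$, so the displayed equation becomes $2 f(A) + 2^{n-1} A^{n} g(A^{-1}) = 0$. Subtracting twice the original identity eliminates $f(A)$ and leaves $(2^{n-1} - 2)\, A^{n} g(A^{-1}) = 0$. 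Since $n > 2$, the integer $2^{n-1} - 2$ is nonzero and hence invertible as a scalar on the $\mathbb{C}$-module $\mathcal{M}$; multiplying by its inverse and then by $A^{-n}$ on the left yields $g(A^{-1}) = 0$ for every invertible $A$, which reduces the hypothesis to $f(A) = 0$ for every invertible $A$.

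Finally, Remark~\ref{rem1} applied to each of $f$ and $g$ promotes vanishing on all invertibles to vanishing on all of $\mathcal{A}$, giving $f = g = 0$. I do not foresee a serious obstacle: the argument is brief and essentially formal. The one delicate point is the nonvanishing of $2^{n-1} - 2$, which is precisely where the hypothesis $n > 2$ enters and which fails at $n = 2$, thereby explaining why Lemma~\ref{lem2.4} requires the more elaborate analysis involving a Jordan left derivation rather than admitting this rigid conclusion.
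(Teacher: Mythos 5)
Your argument is correct and is essentially the paper's own proof: the paper scales $A$ by an arbitrary positive integer $t$, pulls the scalars out by additivity to get $tf(A)+t^{n-1}A^{n}g(A^{-1})=0$, and uses the mismatch of the degrees $1$ and $n-1$ (i.e.\ $n>2$) to kill both terms, then invokes Remark~\ref{rem1} exactly as you do. Your version simply instantiates this at $t=2$ and eliminates $f(A)$ by subtraction, which is a perfectly valid (and slightly more explicit) form of the same homogeneity argument.
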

\begin{proof}
For each positive integer $t$ and each invertible element $A\in \mathcal{A}$, by assumption, we have
 $$f(tA)+(tA)^{n}g((tA)^{-1})=0,$$
which means  $$t^{n-1}A^{n}g(A^{-1})+tf(A)=0.$$
Since the above equation holds for every positive integer $t$,
the coefficients of the above polynomial with respect to $t$  are equal to 0.
It follows from $n >2$ that $A^{n}g(A^{-1})=f(A)=0$.
Thus $g(A)=f(A)=0$ for each invertible element $A\in \mathcal{A}$.
On account of Remark \ref{rem1},  $g(T)=f(T)=0$ for each $T\in \mathcal{A}$.
\end{proof}

The following is a case of the identity (\ref{4}) for the integer $n>2$.
\begin{thm}
 Let $n >2$ be an integer.
 Suppose that $N$ is an invertible element in $\mathcal{A}$, and $M$ is a fixed element in  $\mathcal{M}$. 
If two additive mappings $f$, $g:\mathcal{A}\rightarrow \mathcal{M}$ satisfy the identity
 $$f(A)+A^{n}g(B)=M$$
whenever $A,B\in\mathcal{A}$ with $AB=N$, then $f=g=0.$
\end{thm}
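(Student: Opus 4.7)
My plan is to reduce the two-variable hypothesis to the one-variable identity of Lemma~\ref{lem2.5} by the substitution $B = A^{-1}N$ for invertible $A$, and then to recycle the polynomial-scaling trick used to prove Lemma~\ref{lem2.5} so that the constant $M$ is absorbed along the way.

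Concretely, for each invertible $A \in \mathcal{A}$ the element $A^{-1}N$ lies in $\mathcal{A}$ and satisfies $A(A^{-1}N) = N$, so the hypothesis gives
$$f(A) + A^{n}g(A^{-1}N) = M.$$
For every positive integer $t$, the pair $(tA,\, t^{-1}A^{-1}N)$ still multiplies to $N$; applying the hypothesis to this pair and using that every additive map into the complex module $\mathcal{M}$ is automatically $\mathbb{Q}$-linear, I obtain
$$tf(A) + t^{n-1}A^{n}g(A^{-1}N) = M.$$

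Because $n > 2$, the exponents $0$, $1$, and $n-1$ are pairwise distinct; the above polynomial identity in $t$ holds on the infinite set of positive integers, so its coefficients in degrees $n-1$, $1$, and $0$ must all vanish. This yields $A^{n}g(A^{-1}N) = 0$, $f(A) = 0$, and $M = 0$ for every invertible $A$. Multiplying the first equation on the left by $A^{-n}$ gives $g(A^{-1}N) = 0$, and since $A^{-1}N$ ranges over all invertible elements of $\mathcal{A}$ as $A$ does, both $f$ and $g$ vanish on invertible elements; Remark~\ref{rem1} then delivers $f = g = 0$. I do not anticipate any real obstacle here, as the core of the argument is precisely the polynomial trick of Lemma~\ref{lem2.5}, and the only new observation is that the constant term $-M$ of the polynomial $tf(A) + t^{n-1}A^{n}g(A^{-1}N) - M$ is itself a coefficient which must vanish.
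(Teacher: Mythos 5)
Your proposal is correct and follows essentially the same route as the paper: substitute the pair $(tA,\,t^{-1}A^{-1}N)$ into the hypothesis, use additivity to obtain the polynomial identity $tf(A)+t^{n-1}A^{n}g(A^{-1}N)=M$ in $t$, and conclude that all coefficients (including $M$) vanish before invoking Remark~\ref{rem1}. Your write-up is in fact slightly more careful than the paper's, since you spell out the $\mathbb{Q}$-linearity step and the fact that $A^{-1}N$ ranges over all invertible elements.
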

\begin{proof}
For each positive integer $t$ and each invertible element $A\in \mathcal{A}$, it follows from $(tA)[(tA)^{-1})N]=N$ that 
 $$f(tA)+(tA)^{n}g((tA)^{-1}N)=M,$$
i.e., we have   $t^{n-1}A^{n}g(A^{-1}N)+tf(A)-M=0$ for each  positive integer $t$.
Because of $n >2$, we have $A^{n}g(A^{-1})=f(A)=M=0$ for each invertible element $A\in \mathcal{A}$.
Therefore $g(T)=f(T)=0$ for each $T\in \mathcal{A}$.
\end{proof}


\subsection*{Declarations}
The authors declare that they have no conflict of interest.
No data is used for the research described in the article.

\end{document}